\numberwithin{equation}{section}
\def\ep{{\epsilon}}
\def\p{{\partial}}
\newcounter{marnote}
\begin{document}
\newtheorem{lem}{Lemma}[section]
\newtheorem{rem}{Remark}
\newtheorem{question}{Question}
\newtheorem{prop}{Proposition}
\newtheorem{cor}{Corollary}
\newtheorem{thm}{Theorem}[section]
\newtheorem{definition}{Definition}[section]
\newtheorem{openproblem}{Open Problem}
\newtheorem{conjecture}{Conjecture}

\newenvironment{dedication}
  {
   \vspace*{\stretch{.2}}
   \itshape             
  }
  {
   \vspace{\stretch{1}} 
  }

\title
{Existence results for Toda systems with sign-changing prescribed functions: Part II
\thanks{ sunll\underline{~}m@163.com (Linlin Sun); zhuxiaobao@ruc.edu.cn (Xiaobao Zhu)}}
\author{
Linlin Sun\\
School of Mathematics and Computational Science\\
Xiangtan University\\
Xiangtan 411105, P. R. China\\
\\
Xiaobao Zhu\\
School of Mathematics\\
Renmin University of China\\
Beijing 100872, P. R. China\\
}

\date{ }
\maketitle

\begin{abstract}

Let $(M, g)$ be a compact Riemann surface with area $1$. We investigate the Toda system
\begin{align}
\label{eq-jlw}
\begin{cases}
-\Delta u_1 = 2\rho_1(h_1e^{u_1}-1) - \rho_2(h_2e^{u_2}-1),\\
-\Delta u_2 = 2\rho_2(h_2e^{u_2}-1) - \rho_1(h_1e^{u_1}-1),
\end{cases}
\end{align}
on $(M, g)$ where $\rho_1, \rho_2 \in (0,4\pi]$, and $h_1$ and $h_2$ are two smooth functions on $M$.
When some $\rho_i$ equals $4\pi$, Eq. \eqref{eq-jlw} becomes critical with respect to the Moser-Trudinger inequality for the Toda system, making the existence problem significantly more challenging. In their seminal article (Comm. Pure Appl. Math., 59 (2006), no. 4, 526--558), Jost, Lin, and Wang established sufficient conditions for the existence of solutions to Eq. \eqref{eq-jlw} when $\rho_1=4\pi$, $\rho_2 \in (0,4\pi)$ or $\rho_1=\rho_2=4\pi$, assuming that $h_1$ and $h_2$ are both positive. In our previous paper we extended these results to allow $h_1$ and $h_2$ to change signs in the case $\rho_1=4\pi$, $\rho_2 \in (0,4\pi)$.
In this paper we further extend the study to prove that Jost-Lin-Wang's sufficient conditions remain valid even when $h_1$ and $h_2$ can change signs and $\rho_1=\rho_2=4\pi$. Our proof relies on an improved version of the Moser-Trudinger inequality for the Toda system, along with dedicated analyses similar to Brezis-Merle type and the use of Pohozaev identities.
\end{abstract}

\setcounter {section} {0}

\section{Introduction}
Let $(M, g)$ be a compact Riemann surface with area $1$ , and let $h(x)$  be a smooth function on $M$ . The celebrated Kazdan-Warner problem \cite{KW74} seeks to understand under what conditions on the prescribed function $h$  the following sub-linear elliptic partial differential equation has a solution:
\begin{align}\label{eq-kw}
-\Delta u = 8\pi (he^u - 1).
\end{align}
This problem is often referred to as the ``Nirenberg problem" when $M$  is the standard sphere, and it has been extensively studied \cite{M, KW74, CD87, CY87, CY88, H90, CL93, XY93, CL95, St05, CLLX}, among others. When $M$  is a general Riemann surface, Eq. \eqref{eq-kw} arises in the context of the so-called Chern-Simons Higgs theory \cite{HKP, JWE, CLP, Tar96}, among others. The coefficient $8\pi$  in Eq. \eqref{eq-kw} is critical with respect to the Moser-Trudinger inequality (cf. \cite{Fon, DJLW97}):
\begin{align}
\log \int_M e^u \leq \frac{1}{16\pi} \int_M |\nabla u|^2 + \int_M u + C.
\end{align}
Thus, the existence problem for Eq. \eqref{eq-kw} becomes intricate. Ding-Jost-Li-Wang \cite{DJLW97} addressed this problem using a variational approach by minimizing the functional
\begin{align}
I(u) = \frac{1}{2} \int_M |\nabla u|^2 + 8\pi \int_M u
\end{align}
in the Hilbert space
\begin{align}
X = \left\{ u \in H^1(M) : \int_M h e^u = 1 \right\}.
\end{align}
Assuming $h$  is positive, they showed that if
\begin{align}\label{cond-djlw}
\Delta \log h(p) + 8\pi - 2K(p) > 0,
\end{align}
where $K$  is the Gaussian curvature of $M$ , and $p$  is any maximum point of the sum of $2\log h$ and the regular part of the Green function, then $I$  attains its infimum in $X$  and Eq. \eqref{eq-kw} has a minimal solution. Yang and the second author \cite{YZ17} relaxed the positivity condition on $h$  to nonnegativity. Recently, the first author and Zhu \cite{SZ24} and the second author \cite{Z24} independently proved that the condition \eqref{cond-djlw} remains sufficient even for sign-changed prescribed functions. All these works are based on the variational approach. These results were also obtained using the flow method \cite{LZ19, SZ21, WY22, LX22}.

In this paper, we continue to investigate the Toda system \eqref{eq-jlw}, which can be viewed as the Frenet frame of holomorphic curves in $\mathbb{CP}^2$ (see \cite{G97}) from a geometric perspective, and also arises in physics in the study of the nonabelian Chern-Simons theory in the self-dual case, where a scalar Higgs field is coupled to a gauge potential; see \cite{Dun95,Ya01,Tar08}. Our focus is on the existence result, and we aim to explore the variational approach developed in \cite{DJLW97,JW01,LL05,JLW06}. Recall that \eqref{eq-jlw} represents the critical point of the functional
\begin{align*}
J_{\rho_1,\rho_2}(u_1,u_2) = \frac{1}{3}\int_M \left(|\nabla u_1|^2+\nabla u_1\nabla u_2+|\nabla u_2|^2\right)
+ \rho_1\int_M u_1 + \rho_2\int_M u_2
\end{align*}
in the Hilbert space
\begin{align*}
\mathcal{H} = \left\{(u_1,u_2)\in H^1(M)\times H^1(M):~\int_M h_1e^{u_1}=\int_M h_2e^{u_2}=1\right\}.
\end{align*}
From the Moser-Trudinger inequality for the Toda system
\begin{align}\label{ineq-toda}
\inf_{(u_1,u_2)\in\mathcal{H}} J_{\rho_1,\rho_2} \geq -C~~~~\text{iff}~~~~\rho_1,\rho_2\in(0,4\pi],
\end{align}
derived by Jost-Wang \cite{JW01}, it is known that $J_{\rho_1,\rho_2}$ is coercive and attains its infimum when $\rho_1,\rho_2\in(0,4\pi)$. However, when either $\rho_1$ or $\rho_2$ equals $4\pi$, the existence problem becomes more intricate. In this paper, we shall focus on minimal type solutions. Consequently, we assume $\rho_i \leq 4\pi$, $i=1,2$, throughout the discussion.

Let us review the existence result when one of $\rho_1$ and $\rho_2$ equals $4\pi$, which was obtained by Jost, Lin, and Wang when $h_1$ and $h_2$ are both positive.

\begin{thm}[Jost-Lin-Wang \cite{JLW06}]
    \label{thm-jlw1}
    Let $(M,g)$ be a compact Riemann surface with Gaussian curvature $K$. Let $h_1,h_2 \in C^2(M)$ be two positive functions and $\rho_2 \in (0, 4\pi)$. Suppose that
    \begin{align}\label{JLW-cond-1}
    \Delta \log h_1(x) + (8\pi - \rho_2) - 2K(x) > 0,~~~\forall x \in M,
    \end{align}
    then $J_{4\pi,\rho_2}$ has a minimizer $(u_1, u_2) \in \mathcal{H}$ which satisfies
    \begin{align}\label{eq-jlw-1}
    \begin{cases}
    -\Delta u_1 = 8\pi(h_1 e^{u_1} - 1) - \rho_2(h_2 e^{u_2} - 1),\\
    -\Delta u_2 = 2\rho_2(h_2 e^{u_2} - 1) - 4\pi(h_1 e^{u_1} - 1).
    \end{cases}
    \end{align}
\end{thm}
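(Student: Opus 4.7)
The plan is to follow a Ding-Jost-Li-Wang--type variational scheme adapted to the Toda system, combining subcritical approximation with blow-up analysis. First I would introduce the subcritical perturbation $J_{4\pi-\ep,\rho_2}$ for small $\ep>0$. By the Jost-Wang Moser-Trudinger inequality \eqref{ineq-toda}, this perturbed functional is coercive and weakly lower semicontinuous on $\mathcal{H}$, so it admits a minimizer $(u_1^{\ep},u_2^{\ep})\in\mathcal{H}$ satisfying the corresponding Euler-Lagrange system. The existence of a minimizer for $J_{4\pi,\rho_2}$ then reduces to proving that, under hypothesis \eqref{JLW-cond-1}, this family stays uniformly bounded in $H^1(M)\times H^1(M)$ as $\ep\to 0$; a subsequential weak limit would then produce a critical point of $J_{4\pi,\rho_2}$ attaining the infimum.

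The core of the proof is to exclude blow-up as $\ep\to 0$. Since $\rho_2<4\pi$ is strictly subcritical for the second component, I would first use a Brezis-Merle type argument on the coupled system to show that $u_2^{\ep}$ cannot concentrate; otherwise, combined with the Jost-Wang inequality applied to a suitable combination of $u_1^\ep$ and $u_2^\ep$, we would violate subcriticality at $\rho_2$. Hence only $u_1^\ep$ can blow up, and by mass quantization at the critical threshold $\rho_1=4\pi$ the concentration must occur at a single point $p\in M$ carrying the full limiting mass, with $h_1 e^{u_1^\ep}\rightharpoonup \delta_p$ weakly in measures.

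With concentration localized at $p$, I would extract the precise blow-up profile. Rescaling $u_1^\ep$ around its maximum $x_\ep\to p$ produces in the limit a standard Liouville bubble on $\R^2$, while $u_2^\ep$ (being subcritical) converges in $W^{1,q}$, $q<2$, to a function whose singular part at $p$ is proportional to the Green function $G_p$ of $-\Delta$ on $(M,g)$; this Green-function contribution is what couples $\rho_2$ into the first equation at the blow-up point. Plugging this asymptotic description into a localized Pohozaev identity on small geodesic disks $B_r(x_\ep)$ for the first equation, and passing first $\ep\to 0$ and then $r\to 0$, should yield the necessary condition
\[
\Delta\log h_1(p) + (8\pi-\rho_2) - 2K(p) \le 0,
\]
which directly contradicts \eqref{JLW-cond-1}. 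This rules out blow-up and completes the argument.

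The main obstacle is the combined Brezis-Merle plus Pohozaev analysis for the coupled system: one must carefully verify that the Green-function contribution coming from $u_2^\ep$ at $p$ enters the Pohozaev identity with precisely the coefficient $-\rho_2$ (so that the critical coefficient $8\pi$ is shifted to $8\pi-\rho_2$), and that the $h_1$-dependent boundary integrals on $\p B_r(x_\ep)$, together with the conformal factor of $g$, combine to reproduce $\Delta\log h_1(p)-2K(p)$ in the correct sign. Positivity of $h_1$ and $h_2$ is used throughout to justify standard elliptic regularity and to guarantee the canonical Liouville bubble shape of the concentration profile.
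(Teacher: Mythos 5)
Your overall skeleton --- subcritical minimizers for $J_{4\pi-\ep,\rho_2}$, ruling out concentration of $u_2^\ep$ because $\rho_2<4\pi$, and reducing to a single concentration point $p$ for $u_1^\ep$ with $h_1e^{u_1^\ep}\rightharpoonup\delta_p$ --- matches the framework of the quoted Jost--Lin--Wang proof and of the companion paper \cite{SZhu24+} that this article follows. The decisive final step, however, is not what is actually done there, and as written it has a genuine gap. You claim that inserting the leading-order bubble profile and the Green-function behaviour of $u_2^\ep$ into a localized Pohozaev identity on $B_r(x_\ep)$ yields the necessary condition $\Delta\log h_1(p)+(8\pi-\rho_2)-2K(p)\le 0$. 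A Pohozaev identity at that level of precision only produces \emph{first-order} information, namely the quadratic relations among the local masses (exactly what Lemma \ref{lem-poho} and \eqref{pohozaev8} extract in this paper); the quantities $\Delta\log h_1(p)$ and $K(p)$ are second-order corrections that are invisible once you pass to the limit with only $h_1e^{u_1^\ep}\rightharpoonup\delta_p$ and the $C^2_{\rm loc}$ convergence \eqref{converge-out} in hand. To see them in a Pohozaev balance one needs the next-order expansion of $u_1^\ep$ beyond the standard Liouville bubble in the neck region (Chen--Lin type pointwise estimates), a substantial piece of analysis for systems that the soft arguments you invoke do not supply. So the contradiction you aim for cannot be reached along this route as described. (A smaller gap: single-point concentration with full mass is obtained via the improved Moser--Trudinger inequality for the Toda system, not by ``mass quantization'' alone.)

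What the cited proof actually does --- and what the present paper reproduces in its harder Case 3 --- is an energy comparison: under blow-up one derives a sharp \emph{lower} bound for $\inf_{\mathcal H}J_{4\pi,\rho_2}$ in terms of $\log h_1(p)$ and the regular part $A_1(p)$ of the Green function, using only the leading-order bubble profile; one then evaluates $J_{4\pi,\rho_2}$ on the Ding--Jost--Li--Wang/Li--Li test functions $(\phi_1^\ep,\phi_2^\ep)$, whose expansion equals the same constant minus $\left[\Delta\log h_1(p)+(8\pi-\rho_2)-2K(p)\right]\ep^2(-\log\ep^2)+o(\ep^2(-\log\ep^2))$. Hypothesis \eqref{JLW-cond-1} makes this strictly smaller than the lower bound, which is the contradiction; the condition thus enters through the test-function expansion, not through a pointwise necessary condition at the blow-up point. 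You would need either to switch to this comparison argument or to supply the refined blow-up asymptotics that your Pohozaev step silently assumes.
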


When $\rho_1 = \rho_2 = 4\pi$ and both $h_1$ and $h_2$ are positive, we have:

\begin{thm}[Li-Li \cite{LL05}, Jost-Lin-Wang \cite{JLW06}]
    \label{thm-critical-0}
    Let $(M, g)$ be a compact Riemann surface with Gaussian curvature $K$. Let $h_1, h_2 \in C^2(M)$ be two positive functions. Suppose that
    \begin{align}\label{JLW-cond-2}
    \min\{\Delta \log h_1(x), \Delta \log h_2(x)\} + 4\pi - 2K(x) > 0,~~~\forall x \in M,
    \end{align}
    then $J_{4\pi, 4\pi}$ has a minimizer $(u_1, u_2) \in \mathcal{H}$ which satisfies
    \begin{align}\label{eq-jlw-2}
    \begin{cases}
    -\Delta u_1 = 8\pi(h_1 e^{u_1} - 1) - 4\pi(h_2 e^{u_2} - 1),\\
    -\Delta u_2 = 8\pi(h_2 e^{u_2} - 1) - 4\pi(h_1 e^{u_1} - 1).
    \end{cases}
    \end{align}
\end{thm}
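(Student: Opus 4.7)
The plan is to deduce the theorem via a subcritical approximation combined with a blow-up analysis. For $t\in(0,1)$ the Jost-Wang inequality \eqref{ineq-toda} makes $J_{4\pi t,4\pi t}$ coercive on $\mathcal{H}$, so it admits a minimizer $(u_1^t,u_2^t)$. If these minimizers remain bounded in $H^1(M)\times H^1(M)$ as $t\uparrow 1$, standard elliptic regularity produces a minimizer of $J_{4\pi,4\pi}$ satisfying \eqref{eq-jlw-2}. The entire work therefore consists in excluding blow-up of $(u_1^t,u_2^t)$ under assumption \eqref{JLW-cond-2}.

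I would first establish an improved Moser-Trudinger inequality for the Toda system, stating that if either normalized mass $h_i e^{u_i}\,dv_g$ is split between two sets at positive distance, the constant $1/16\pi$ in front of $\int|\nabla u_i|^2$ strictly improves. Together with a Brezis-Merle type local analysis of the two equations in \eqref{eq-jlw-2}, this forces the following alternative in case of blow-up: after passing to a subsequence, there exists a single point $p\in M$ at which both $h_1e^{u_1^t}$ and $h_2e^{u_2^t}$ concentrate as Dirac measures, with mass $4\pi$ on each component. The symmetry $\rho_1=\rho_2$ is essential to rule out the asymmetric scenario where only one component concentrates.

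Next I would perform a sharp blow-up analysis at $p$. Rescaling by the concentration parameter and invoking the classification of entire solutions of the $SU(3)$ Toda system on $\R^2$, together with a Pohozaev identity applied to $u_i^t$ on small geodesic disks around $p$, yields a quantitative lower bound
\[
\liminf_{t\uparrow 1} J_{4\pi t,4\pi t}(u_1^t,u_2^t) \geq -8\pi\bigl(A_p+\log h_1(p)+\log h_2(p)\bigr)-C_0,
\]
where $A_p$ is the Robin constant of the Green function of $-\Delta$ at $p$ and $C_0$ is a universal constant coming from the entire bubble profile.

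The main obstacle, and the decisive step, is the construction of test configurations that beat this lower bound. One builds $(\varphi_1^\ep,\varphi_2^\ep)\in\mathcal{H}$ whose two components are both logarithmic bubbles $-2\log(\ep^2+|x|^2)$ centered at the point $q$ maximizing $A_q+\log h_1(q)+\log h_2(q)$, corrected by a Green-function tail and Lagrange constants enforcing both normalizations $\int_M h_ie^{\varphi_i^\ep}=1$. A term-by-term expansion of $J_{4\pi,4\pi}(\varphi_1^\ep,\varphi_2^\ep)$ reproduces the bound above but with a residual correction of order $\ep^2\log(1/\ep)$ whose sign is controlled by $\min\{\Delta\log h_1(p),\Delta\log h_2(p)\}+4\pi-2K(p)$. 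Hypothesis \eqref{JLW-cond-2} makes this residual strictly negative, producing an upper bound for $\inf_{\mathcal{H}}J_{4\pi,4\pi}$ that is strictly below the blow-up lower bound, contradicting minimality of $(u_1^t,u_2^t)$ and hence forcing boundedness. The hard part is the bookkeeping of the many cross terms in the expansion of $J$: because both bubbles must sit at the same point $p$, the symmetric contributions of $h_1$ and $h_2$ combine to produce exactly the minimum that appears in \eqref{JLW-cond-2}.
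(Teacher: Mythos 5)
Your overall skeleton (subcritical minimizers, an improved Moser--Trudinger inequality, Brezis--Merle local analysis, a Pohozaev identity, a lower bound for the infimum under blow-up, and test functions beating it) is exactly the strategy of Li--Li, Jost--Lin--Wang, and the present paper. However, the central claim of your blow-up analysis is wrong, and the error propagates into both your lower bound and your test-function construction.

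You assert that in the blow-up regime both measures $h_1e^{u_1^t}\,dv_g$ and $h_2e^{u_2^t}\,dv_g$ concentrate at a \emph{single common} point $p$, and you then invoke the classification of entire $SU(3)$ Toda solutions at $p$. This is precisely the scenario that the Pohozaev identity excludes. Writing $\sigma_i=h_i\mu_i$ for the weak limits, identity \eqref{pohozaev8} at a blow-up point $x_l$ reads $\sigma_1^2(\{x_l\})+\sigma_2^2(\{x_l\})-\sigma_1(\{x_l\})\sigma_2(\{x_l\})=\sigma_1(\{x_l\})+\sigma_2(\{x_l\})$. Once the improved Moser--Trudinger inequality forces, say, $\sigma_1=\delta_{x_1}$, substituting $\sigma_1(\{x_1\})=1$ gives $\sigma_2(\{x_1\})\in\{0,2\}$, and the value $2$ is impossible because the total mass of $\sigma_2$ equals $1$. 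Hence $\sigma_2=\delta_{x_2}$ with $x_2\neq x_1$ (Lemma \ref{mu_1mu_2}): the two components bubble at \emph{distinct} points, each with a scalar Liouville profile $-2\log(1+\pi h_i(x_i)|x|^2)$, so no $SU(3)$ classification is needed, and a common concentration point is not an available case. Consequently the correct lower bound \eqref{lower-bound} involves $2\log h_1(x_1)+A_1(x_1)$ and $2\log h_2(x_2)+A_2(x_2)$ at two different points, where $A_1,A_2$ are the regular parts of the \emph{coupled} Green system (with $G_1\sim-4\log r$ near $x_1$ but $G_1\sim+2\log r$ near $x_2$), not the Robin constant of the scalar Laplacian; and the test functions must be two bubbles centered at two independently chosen maximizers $p_1$ and $p_2$, each controlled by its own condition $\Delta\log h_i(p_i)+4\pi-2K(p_i)>0$. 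The minimum in \eqref{JLW-cond-2} is simply the conjunction of these two separate conditions, not a cross term produced by stacking two bubbles at one point. Finally, your remark that the symmetry $\rho_1=\rho_2$ rules out the asymmetric scenario in which only one component concentrates is also incorrect: that scenario (Cases 1 and 2 of Section 2) does occur and must be handled separately, by reduction to the $\rho_1=4\pi$, $\rho_2\in(0,4\pi)$ situation of Theorem \ref{thm-jlw1}.
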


We remark that Li-Li obtained Theorem \ref{thm-critical-0} when $h_1 = h_2 = 1$ and Jost-Lin-Wang obtained it for general positive $h_1$ and $h_2$.

Motivated mostly by works in \cite{DJLW97, YZ17, SZ24, Z24}, we would like to relax conditions \eqref{JLW-cond-1} and \eqref{JLW-cond-2}, since the condition that $h_1$ and $h_2$ are positive somewhere is more natural than the condition that $h_1$ and $h_2$ are positive everywhere. In our former paper \cite{SZhu24+}, we successfully relaxed \eqref{JLW-cond-1} when $h_1$ and $h_2$ can change signs and proved the following theorem.

\begin{thm}[Sun-Zhu \cite{SZhu24+}]\label{thm-sz-1}
Let $(M,g)$ be a compact Riemann surface with the Gaussian curvature $K$. Let $h_1, h_2 \in C^2(M)$ which are positive somewhere and $\rho_2 \in (0, 4\pi)$. Denote $M_1^+ = \{x \in M : h_1(x) > 0\}$. If
\begin{align*}
\Delta \log h_1(x) + (8\pi - \rho_2) - 2K(x) > 0, \quad \forall x \in M_1^+,
\end{align*}
then $J_{4\pi,\rho_2}$ has a minimizer $(u_1,u_2) \in \mathcal{H}$ which satisfies \eqref{eq-jlw-1}.
\end{thm}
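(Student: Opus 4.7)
The plan is to adapt the subcritical approximation and blow-up scheme pioneered by Ding-Jost-Li-Wang \cite{DJLW97} and extended to Toda systems by Jost-Lin-Wang \cite{JLW06}, modifying the blow-up analysis along the lines of \cite{SZ24,Z24} to accommodate sign-changing $h_1$. For each small $\epsilon>0$ I consider the subcritical functional $J_{4\pi-\epsilon,\rho_2}$ on $\mathcal{H}$. Since $4\pi-\epsilon$ and $\rho_2$ both lie strictly below $4\pi$, the Moser-Trudinger inequality \eqref{ineq-toda} of Jost-Wang makes $J_{4\pi-\epsilon,\rho_2}$ coercive, and a direct minimization produces a minimizer $(u_1^\epsilon,u_2^\epsilon)\in\mathcal{H}$ solving the corresponding Euler-Lagrange system. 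The task then reduces to proving $H^1\times H^1$ compactness of $\{(u_1^\epsilon,u_2^\epsilon)\}$ as $\epsilon\to0$; weak lower semicontinuity of $J$ will then deliver a minimizer of $J_{4\pi,\rho_2}$.

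Because $\rho_2<4\pi$ is strictly subcritical, $u_2^\epsilon$ must remain uniformly bounded in $H^1$, so compactness can only fail through concentration of $u_1^\epsilon$. Assuming blow-up occurs, a Brezis-Merle type dichotomy applied to the first equation of \eqref{eq-jlw-1}, perturbed by the $L^\infty$-controlled contribution of $u_2^\epsilon$, shows that $h_1 e^{u_1^\epsilon}$ concentrates as a finite sum of Dirac masses at blow-up points $\{p_1,\dots,p_N\}\subset M$, with total mass saturating $\int_M h_1 e^{u_1^\epsilon}=1$.

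The decisive step, and the main obstacle compared with the classical positive-$h_1$ theorem, is to confine all blow-up points to $M_1^+$. Using the Green representation for $u_1^\epsilon$ together with an improved Moser-Trudinger inequality localized on neighborhoods of the zero set of $h_1$, one shows that concentration cannot occur at a point where $h_1\le 0$: the local mass $\int_{B_r(p_i)} h_1 e^{u_1^\epsilon}$ cannot produce the required positive Dirac weight. This is precisely the mechanism used in \cite{SZ24,Z24} for the Kazdan-Warner problem, transplanted to the Toda setting where the coupling with the bounded component $u_2^\epsilon$ contributes only lower-order terms.

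Once every $p_i$ lies in $M_1^+$, the argument concludes with the standard test-function versus blow-up lower-bound comparison. A Pohozaev identity at each $p_i$ produces a sharp lower bound for $\liminf_{\epsilon\to 0} J_{4\pi-\epsilon,\rho_2}(u_1^\epsilon,u_2^\epsilon)$ involving $-\log h_1(p_i)$, the regular part of the Green function, and $K(p_i)$, while a concentrating test-function family centered at a maximizer $p^*\in M_1^+$ of $2\log h_1$ plus the regular part of the Green function gives a matching upper bound. The hypothesis $\Delta\log h_1+(8\pi-\rho_2)-2K>0$ on $M_1^+$ makes the test-function bound strictly smaller than the blow-up value, contradicting the minimality of $(u_1^\epsilon,u_2^\epsilon)$ and forcing compactness. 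I expect the \emph{sign-changing Brezis-Merle localization} to be the most delicate ingredient, since the classical argument leans on positivity of $h_1$ near the blow-up point; the improved Moser-Trudinger inequality for Toda systems previewed in the abstract is the natural tool to handle it.
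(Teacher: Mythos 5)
You should first note that this paper does not actually prove Theorem \ref{thm-sz-1}: it is imported from Part I \cite{SZhu24+} and used as a black box in Cases 1 and 2 of Section 2, so the only available comparison is with the framework the paper describes and carries out for the analogous Theorem \ref{thm-sz-2}. Your overall architecture --- subcritical minimization of $J_{4\pi-\epsilon,\rho_2}$, Brezis--Merle concentration, Pohozaev identities, and a lower-bound-versus-test-function comparison over $M_1^+$ --- is indeed that framework.

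Two of your steps, however, would fail as written. First, the claim that $u_2^\epsilon$ ``must remain uniformly bounded in $H^1$'' because $\rho_2<4\pi$, and that the coupling with $u_2^\epsilon$ ``contributes only lower-order terms,'' is false in the blow-up regime: when $h_1e^{u_1^\epsilon}$ concentrates at a point $p$, the second component converges to a function whose equation carries $-4\pi\delta_p$ on the right-hand side and which therefore has a $+2\log|x-p|$ singularity; consequently $\|\nabla u_2^\epsilon\|_{L^2}$ diverges at the same logarithmic rate as $\|\nabla u_1^\epsilon\|_{L^2}$, and the cross term $\int_M\nabla u_1^\epsilon\cdot\nabla u_2^\epsilon$ enters the energy expansion at leading order. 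What is actually true --- and what the present paper explicitly says $\rho_2<4\pi$ is used for in \cite{SZhu24+} --- is the much weaker statement $\overline{u_2^\epsilon}\ge -C$, i.e.\ that $e^{u_2^\epsilon}$ does not itself concentrate; the sharp lower bound for $J_{4\pi,\rho_2}$ must then be computed with the full singular profiles of \emph{both} components, as in \cite{LL05,JLW06} and Section 3 here, and treating $u_2^\epsilon$ perturbatively yields the wrong constant, so the comparison with the test functions does not close. Second, your ``decisive step'' of excluding blow-up points from $\{h_1\le 0\}$ by a Moser--Trudinger inequality localized near the zero set of $h_1$ is asserted rather than argued, and it is not the mechanism that works. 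The route taken in these papers is: prove the two-sided bound $C_1\le\int_M e^{u_1^\epsilon}\le C_2$ (nontrivial precisely because $h_1$ changes sign; cf.\ Lemma \ref{lem-bound}), so that $e^{u_1^\epsilon}dv_g$ converges to a \emph{nonnegative} measure $\mu_1$; use the Brezis--Merle estimate and Fatou's lemma to show $\mu_1$ is a finite sum of Dirac masses; use the improved Moser--Trudinger inequality and the Pohozaev quantization to reduce to $h_1\mu_1=\delta_{x_1}$; and then $h_1(x_1)>0$ is automatic from $h_1(x_1)\mu_1(\{x_1\})=1$ and $\mu_1\ge 0$. Without these ingredients your central localization claim is unsupported.
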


In this paper, we shall show that \eqref{JLW-cond-2} in Theorem \ref{thm-critical-0} can also be relaxed to the case where $h_1$ and $h_2$ can change signs. Specifically,

\begin{thm}\label{thm-sz-2}
Let $(M,g)$ be a compact Riemann surface with the Gaussian curvature $K$. Let $h_1, h_2 \in C^2(M)$ which are positive somewhere. Denote
$M_i^+ = \{x \in M : h_i(x) > 0\}$ for $i = 1, 2$. If
\begin{align}\label{sz-cond-2}
\Delta \log h_i(x) + 4\pi - 2K(x) > 0, \quad \forall x \in M_i^+, \quad i = 1, 2,
\end{align}
then $J_{4\pi,4\pi}$ has a minimizer $(u_1,u_2) \in \mathcal{H}$ which satisfies \eqref{eq-jlw-2}.
\end{thm}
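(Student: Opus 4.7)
The plan is to adapt the Jost-Lin-Wang proof of Theorem \ref{thm-critical-0} to the sign-changing setting by combining their blow-up analysis with the techniques we developed in \cite{SZhu24+}: an improved Moser-Trudinger inequality for the Toda system, refined Brezis-Merle type estimates based only on the positive parts of $h_1, h_2$, and Pohozaev identities. I begin with a subcritical approximation. Pick $\rho_n \nearrow 4\pi$ and let $(u_1^n, u_2^n) \in \mathcal{H}$ be a minimizer of $J_{\rho_n,\rho_n}$; such minimizers exist because the strict form of \eqref{ineq-toda} makes $J_{\rho_n,\rho_n}$ coercive. If the sequence stays bounded in $H^1 \times H^1$, standard weak compactness delivers a minimizer of $J_{4\pi,4\pi}$, so the problem reduces to excluding blow-up under hypothesis \eqref{sz-cond-2}.

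Suppose blow-up occurs. Following \cite{JLW06}, a Brezis-Merle analysis shows that the blow-up set of each $u_i$ is finite and the local masses are quantized. The sign-changing feature enters at two points. First, since $\int_M h_i e^{u_i^n} = 1$ and mass cannot concentrate where $h_i \leq 0$, every blow-up point of $u_i$ automatically lies in $\overline{M_i^+}$; a further argument adapted from \cite{SZhu24+}, using the improved Moser-Trudinger inequality together with a capacity/test-function analysis, is then needed to push these points into the open set $M_i^+$ so that the pointwise hypothesis \eqref{sz-cond-2} is directly applicable. Second, because $\rho_1 = \rho_2 = 4\pi$ is doubly critical, only two concentration patterns are admissible: either both components concentrate, each with total mass $4\pi$, at a single common point $p_0 \in M_1^+ \cap M_2^+$, or at two distinct points $p_i \in M_i^+$. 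In either case, applying the Pohozaev identity near each blow-up point and passing to the limit should yield, at every blow-up point $p$ of $u_i$,
\[
\Delta \log h_i(p) + 4\pi - 2K(p) \leq 0,
\]
contradicting \eqref{sz-cond-2} and closing the argument.

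I expect the chief obstacle to be the double criticality. In \cite{SZhu24+} only $u_1$ was at the critical threshold while $u_2$ remained bounded in $H^1$, so the coupling term $\rho_2(h_2 e^{u_2} - 1)$ could be treated as a controlled perturbation near the blow-up point of $u_1$. Here $u_1$ and $u_2$ may concentrate simultaneously, possibly at a common point, and their singular profiles interact through the Toda coupling. Disentangling them will require careful bookkeeping of the auxiliary combinations $2u_i + u_j$, which are the natural Green-function-like potentials for the Toda system, together with a refined use of the improved Moser-Trudinger inequality to rule out anomalous concentration on $\partial M_i^+$ and to exclude pathological partial bubblings. Once this is in place, the concluding Pohozaev computation is relatively standard.
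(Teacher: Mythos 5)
Your setup (subcritical minimizers, Brezis--Merle analysis, the improved Moser--Trudinger inequality, Pohozaev identities) matches the paper's, but the concluding mechanism does not work as stated. You claim that applying the Pohozaev identity near each blow-up point ``should yield'' $\Delta\log h_i(p)+4\pi-2K(p)\le 0$. The local Pohozaev identity only produces an algebraic relation among the limiting masses: writing $\sigma_i=h_i\mu_i$, one gets $\sigma_1^2(\{x_l\})+\sigma_2^2(\{x_l\})-\sigma_1(\{x_l\})\sigma_2(\{x_l\})=\sigma_1(\{x_l\})+\sigma_2(\{x_l\})$ at each concentration point; it carries no information about $\Delta\log h_i$ or the curvature. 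Extracting a second-order pointwise condition of the kind you assert would require expanding the bubbling profile to the order of the error terms $\ep^2(-\log\ep^2)$, which you do not do and which is not how the argument closes. The paper uses the Pohozaev relation only structurally: combined with $\sigma_1(M)=\sigma_2(M)=1$ and the improved Moser--Trudinger inequality (which forces at least one support to be a singleton), it shows $h_1\mu_1=\delta_{x_1}$ and $h_2\mu_2=\delta_{x_2}$ with $x_1\ne x_2$ --- in particular your ``common point $p_0$'' scenario is impossible, since $\sigma_1(\{x_1\})=1$ forces $\sigma_2(\{x_1\})\in\{0,2\}$, incompatible with total mass $1$. The hypothesis \eqref{sz-cond-2} then enters not through a local identity at the blow-up point but through an energy comparison: the identified blow-up structure yields an explicit lower bound for $\inf_{\mathcal{H}}J_{4\pi,4\pi}$ in terms of $\log h_i(x_i)$ and the Green-function constants $A_i(x_i)$, while the Li--Li test functions give an upper bound that is strictly smaller, by $\left[\Delta\log h_i(p_i)+4\pi-2K(p_i)\right]\ep^2(-\log\ep^2)$, precisely under \eqref{sz-cond-2}; this contradiction excludes blow-up.

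Two smaller points. Your worry about concentration on $\partial M_i^+$ resolves itself once the measures are identified: $h_i(x_i)\mu_i(\{x_i\})=1>0$ forces $h_i(x_i)>0$, so no separate capacity or test-function argument is needed to push the points into the open set $M_i^+$. Also, the paper first splits into three cases according to which of $\overline{u_1^\ep}$, $\overline{u_2^\ep}$ tends to $-\infty$; the two cases where only one does reduce to Part I, and all of the new analysis above concerns only the case where both do.
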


At the end of the introduction, we would like to outline the proof of Theorem \ref{thm-sz-2}. For any $\epsilon \in (0, 4\pi)$, we assume that $J_{4\pi-\epsilon, 4\pi-\epsilon}(u_1^\epsilon, u_2^\epsilon) = \inf_{\mathcal{H}} J_{4\pi-\epsilon, 4\pi-\epsilon}$, then $(u_1^\epsilon, u_2^\epsilon)$ satisfies a Toda type system. If $(u_1^\epsilon, u_2^\epsilon)$ converges to some $(u_1, u_2) \in \mathcal{H}$ as $\epsilon \to 0$, then $J_{4\pi, 4\pi}(u_1, u_2) = \inf_{\mathcal{H}} J_{4\pi, 4\pi}$, and we are done. Otherwise, if $(u_1^\epsilon, u_2^\epsilon)$ does not converge in $\mathcal{H}$, we say that $(u_1^\epsilon, u_2^\epsilon)$ blows up. We show that there are three characterizations of the definition of blow-up, one of which is that $\overline{u_1^\epsilon} + \overline{u_2^\epsilon} \to -\infty$ as $\epsilon \to 0$. Here, $\overline{u_i^\epsilon}$ denotes the mean value of $u_i^\epsilon$ on $M$, for $i = 1, 2$. Based on this characterization, we divide the proof into three cases:

\begin{itemize}
    \item Case 1: $\overline{u_1^\epsilon} \to -\infty$ and $\overline{u_2^\epsilon} \geq -C$ as $\epsilon \to 0$.
    \item Case 2: $\overline{u_1^\epsilon} \geq -C$ and $\overline{u_2^\epsilon} \to -\infty$ as $\epsilon \to 0$.
    \item Case 3: $\overline{u_1^\epsilon} \to -\infty$ and $\overline{u_2^\epsilon} \to -\infty$ as $\epsilon \to 0$.
\end{itemize}
Case 1 is similar to the situation where $\rho_1 = 4\pi$ and $\rho_2 \in (0, 4\pi)$, which has been proved by us in \cite{SZhu24+}. Case 2 follows from Case 1. Suppose we are in Case 3. Since $h_1$ and $h_2$ can change signs, we do not have directly the characterization of the blow-up set (Proposition 2.4 in \cite{JLW06}). More effort is needed to understand the blow-up set, which is one of the main contributions in this paper. Since the $L^1$ norm of $e^{u_i^\epsilon}$ is bounded, $e^{u_i^\epsilon} dv_g$ converges to some nonnegative measure $\mu_i$, and $\text{supp} \mu_i \neq \emptyset$, for $i = 1, 2$. By Fatou's lemma, $\text{supp} \mu_i$ is a finite set, for $i = 1, 2$. With the help of the improved Moser-Trudinger inequality for the Toda system, we know that at least one $\text{supp} \mu_i$ is a single point set. By the Pohozaev identity, we derive that $\text{supp} \mu_j$ ($j \neq i$) is also a single point set, which is different from $\text{supp} \mu_i$. Then we can show that $h_1 \mu_1 = \delta_{x_1}$ and $h_2 \mu_2 = \delta_{x_2}$ with $x_1 \neq x_2$. Based on this, we derive a dedicated lower bound for $J_{4\pi, 4\pi}$. Finally, we use the test functions $(\phi_1^\epsilon, \phi_2^\epsilon)$ constructed in \cite{LL05} to show that under condition \eqref{sz-cond-2}, $J_{4\pi, 4\pi}(\phi_1^\epsilon, \phi_2^\epsilon)$ are strictly less than the lower bound derived before. This contradiction tells us that $(u_1^\epsilon, u_2^\epsilon)$ does not blow up, which proves Theorem \ref{thm-sz-2}.

There are some related works which deal with sign-changing potential in the critical case with respect to
Moser-Trudinger type inequalities (cf. \cite{Mart09,YuZ24+}). We believe that our techniques could be used to deal with other nonlinear
existence problems with sign-changing prescribed functions.

The outline of the rest of the paper is following: In Sect. 2, we do some analysis on the minimizing sequence; In Sect. 3, we estimate the lower bound for
$J_{4\pi,4\pi}$ in Case 3; Finally, we complete the proof of Theorem \ref{thm-sz-2} in the last section. Throughout the whole paper,
the constant $C$ is varying from line to line and even in the same line, we do not distinguish
sequence and its subsequences since we only care about the existence result.

\section{Analysis on the minimizing sequence}

In this section, we conduct an analysis on the minimizing sequence.

Given inequality \eqref{ineq-toda}, it is known that for any $\ep \in (0, 4\pi)$, there exists a pair $(u^\ep_1, u^{\ep}_2) \in \mathcal{H}$ such that
\[ J_{4\pi-\ep,4\pi-\ep}(u_1^\ep, u_2^\ep) = \inf_{\mathcal{H}} J_{4\pi-\ep,4\pi-\ep}. \]
Direct calculations reveal the following equations on $M$:
\begin{align}\label{eq-uep}
\begin{cases}
-\Delta u^\ep_1 = (8\pi - 2\ep)(h_1 e^{u_1^\ep} - 1) - (4\pi - \ep)(h_2 e^{u^\ep_2} - 1), \\
-\Delta u^\ep_2 = (8\pi - 2\ep)(h_2 e^{u^\ep_2} - 1) - (4\pi - \ep)(h_1 e^{u_1^\ep} - 1).
\end{cases}
\end{align}

Let $\overline{u^\ep_i} = \int_M u^\ep_i$ and $m^\ep_i = \max_{M} u^\ep_i = u_i^\ep(x_i^\ep)$ for some $x_i^{\ep} \in M$. Assume $x_i^\ep \to p_i$ as $\ep \to 0$. The following three lemmas can be derived similarly to those in \cite[section 2]{SZhu24+}.

\begin{lem}\label{lem-bound}
There exist two positive constants $C_1$ and $C_2$ such that
\[ C_1 \leq \int_M e^{u_i^{\ep}} \leq C_2, \quad i = 1, 2. \]
\end{lem}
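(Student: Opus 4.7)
The plan is to establish the lower bound directly from the constraint defining $\mathcal{H}$, and to obtain the upper bound by combining the minimality of $(u_1^\ep,u_2^\ep)$ with the Jost--Wang Moser--Trudinger inequality \eqref{ineq-toda}.

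For the lower bound, since $h_i\in C^2(M)$ is bounded on the compact surface, the constraint $\int_M h_ie^{u_i^\ep}=1$ immediately yields
\[
1=\int_M h_ie^{u_i^\ep}\le \|h_i\|_{L^\infty(M)}\int_M e^{u_i^\ep},
\]
so $\int_M e^{u_i^\ep}\ge 1/\|h_i\|_{L^\infty(M)}=:C_1>0$.

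For the upper bound, I would first exhibit a fixed smooth pair $(w_1,w_2)\in\mathcal{H}$: since each $h_i$ is positive somewhere, picking a smooth bump $\eta_i$ concentrated near a point of $M_i^+$ and taken large enough there to guarantee $\int_M h_ie^{\eta_i}>0$, one may set $w_i=\eta_i-\log\int_M h_ie^{\eta_i}$. Since $(u_1^\ep,u_2^\ep)$ minimizes $J_{4\pi-\ep,4\pi-\ep}$ on $\mathcal{H}$, this gives
\[
J_{4\pi-\ep,4\pi-\ep}(u_1^\ep,u_2^\ep)\le J_{4\pi-\ep,4\pi-\ep}(w_1,w_2)\le C
\]
uniformly in $\ep$. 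As the Dirichlet quadratic form is non-negative, I then obtain $\overline{u_1^\ep}+\overline{u_2^\ep}\le C/(4\pi-\ep)\le C$ for $\ep$ small, and hence
\[
J_{4\pi,4\pi}(u_1^\ep,u_2^\ep)=J_{4\pi-\ep,4\pi-\ep}(u_1^\ep,u_2^\ep)+\ep\bigl(\overline{u_1^\ep}+\overline{u_2^\ep}\bigr)\le C.
\]

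Applying \eqref{ineq-toda} to $(u_1-\overline{u_1},u_2-\overline{u_2})$ yields the equivalent unconstrained form $4\pi\log\int_M e^{u_1}+4\pi\log\int_M e^{u_2}\le J_{4\pi,4\pi}(u_1,u_2)+C$ on $H^1(M)\times H^1(M)$. Substituting $(u_1^\ep,u_2^\ep)$ and invoking the already-established lower bound $\log\int_M e^{u_j^\ep}\ge -\log\|h_j\|_{L^\infty}$ forces $\log\int_M e^{u_i^\ep}\le C$ for each $i$, which finishes the proof. The whole argument is essentially routine and parallels the corresponding lemma in \cite{SZhu24+}; the only point in the present sign-changing setting that requires any care is producing the explicit test pair $(w_1,w_2)\in\mathcal{H}$, since one cannot simply use $(0,0)$ when $\int_M h_i\neq 1$.
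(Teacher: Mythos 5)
Your argument is correct and follows the standard route that the paper delegates to \cite[Section 2]{SZhu24+}: the lower bound comes from the constraint $\int_M h_ie^{u_i^\ep}=1$ together with $|h_i|\le \|h_i\|_{L^\infty}$, and the upper bound from comparing with a fixed competitor in $\mathcal{H}$ to bound $J_{4\pi-\ep,4\pi-\ep}(u_1^\ep,u_2^\ep)$ and then invoking the Jost--Wang Moser--Trudinger inequality in its unconstrained form. Your extra care in producing the test pair $(w_1,w_2)$ when $h_i$ changes sign, and in passing from the $4\pi-\ep$ functional to the $4\pi$ one via the bound on $\overline{u_1^\ep}+\overline{u_2^\ep}$, is exactly what is needed; no gaps.
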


\begin{lem}\label{lem-Ls}
For any $s \in (1, 2)$, $\|\nabla u_i^\ep\|_{L^s(M)} \leq C$ for $i = 1, 2$.
\end{lem}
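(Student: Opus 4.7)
\textbf{Proof proposal for Lemma \ref{lem-Ls}.} The plan is to combine Lemma \ref{lem-bound} with a standard elliptic regularity estimate for the Laplacian with $L^1$ right-hand side.

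First, I would bound the right-hand sides of system \eqref{eq-uep} uniformly in $L^1(M)$. Since $h_1, h_2 \in C^2(M)$ are bounded and Lemma \ref{lem-bound} gives $\int_M e^{u_i^\ep} \leq C_2$, one has $\|h_i e^{u_i^\ep}\|_{L^1(M)} \leq \|h_i\|_{L^\infty} C_2$, and the constant terms contribute a fixed $L^1$-mass. Together with $\ep \in (0,4\pi)$, this yields $\|\Delta u_i^\ep\|_{L^1(M)} \leq C$ for $i=1,2$, uniformly in $\ep$.

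Next, I would invoke the classical fact that on a compact surface $(M,g)$, if $f \in L^1(M)$ satisfies $\int_M f = 0$ and $v$ is the unique mean-zero solution of $-\Delta v = f$, then $\|\nabla v\|_{L^s(M)} \leq C_s \|f\|_{L^1(M)}$ for every $s \in [1,2)$. This can be proved via the Green function representation $v(x) = \int_M G(x,y) f(y)\, dv_g(y)$ together with the bound $|\nabla_x G(x,y)| \lesssim d(x,y)^{-1}$, which places $\nabla_x G(x,\cdot)$ in the weak Marcinkiewicz space $L^{2,\infty}(M)$; the stated bound follows from an $L^1$--$L^{s,\infty}$ Young-type inequality, or dually by testing against $W^{1,s'}$ functions with $s' > 2$, which embed into $L^\infty$ by Morrey's inequality.

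Applying this to $v_i^\ep := u_i^\ep - \overline{u_i^\ep}$ (which has mean zero and the same gradient as $u_i^\ep$), and noting that the right-hand side of each equation in \eqref{eq-uep} automatically has zero integral over $M$ by the divergence theorem, the first step yields $\|\nabla u_i^\ep\|_{L^s(M)} \leq C_s$ for every $s \in (1,2)$, as desired. There is no genuine obstacle here: the argument is a direct combination of an $L^1$ bound on the nonlinearity (already secured by Lemma \ref{lem-bound}) with a classical regularity estimate; this is why the authors simply quote \cite{SZhu24+}, where the identical reasoning was used in the $\rho_2 \in (0, 4\pi)$ setting.
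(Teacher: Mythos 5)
Your proposal is correct and is essentially the argument the paper delegates to \cite{SZhu24+}: the constraint $\int_M h_ie^{u_i^\ep}=1$ together with Lemma \ref{lem-bound} gives a uniform $L^1$ bound on the (mean-zero) right-hand sides of \eqref{eq-uep}, and the standard Green-function/duality estimate $\|\nabla v\|_{L^s(M)}\leq C_s\|f\|_{L^1(M)}$ for $-\Delta v=f$ with $\int_M f=0$ and $s\in(1,2)$ then yields the claim. No gaps.
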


\begin{lem}\label{lem-char}
The following statements are equivalent:
\begin{enumerate}[label=(\roman*)]
    \item $m_1^{\ep} + m_2^{\ep} \to +\infty$ as $\ep \to 0$,
    \item $\int_M (|\nabla u^\ep_1|^2 + \nabla u_1^\ep \cdot \nabla u_2^\ep + |\nabla u^\ep_2|^2) \to +\infty$ as $\ep \to 0$,
    \item $\overline{u^\ep_1} + \overline{u^\ep_2} \to -\infty$ as $\ep \to 0$.
\end{enumerate}
\end{lem}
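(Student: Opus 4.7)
The plan is to prove two equivalences, (ii) $\Leftrightarrow$ (iii) and (i) $\Leftrightarrow$ (ii), which together yield the full cycle. The starting point I need is that $J_{4\pi-\ep,4\pi-\ep}(u_1^\ep,u_2^\ep)$ is uniformly bounded in $\ep \in (0,4\pi)$. For the upper bound, fix once and for all any $(v_1,v_2) \in \mathcal{H}$ (which exists because each $h_i$ is positive somewhere, so a bump supported in $\{h_i>0\}$ can be normalized by an additive constant), and note $J_{4\pi-\ep,4\pi-\ep}(u_1^\ep,u_2^\ep) \le J_{4\pi-\ep,4\pi-\ep}(v_1,v_2) \le C$. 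For the lower bound, I would apply Jost--Wang's inequality \eqref{ineq-toda} at the endpoint $\rho_1=\rho_2=4\pi$ to get $J_{4\pi,4\pi}(u_1^\ep,u_2^\ep) \ge -C$, and absorb the $\ep$-error using $\overline{u_i^\ep} \le \log C_2$ (from Jensen's inequality and Lemma \ref{lem-bound}):
\[
J_{4\pi-\ep,4\pi-\ep}(u_1^\ep,u_2^\ep) = J_{4\pi,4\pi}(u_1^\ep,u_2^\ep) - \ep(\overline{u_1^\ep}+\overline{u_2^\ep}) \ge -C - 2\ep\log C_2.
\]

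The equivalence (ii) $\Leftrightarrow$ (iii) then drops out of the identity
\[
\tfrac{1}{3}\int_M\left(|\nabla u_1^\ep|^2 + \nabla u_1^\ep\cdot\nabla u_2^\ep + |\nabla u_2^\ep|^2\right) = J_{4\pi-\ep,4\pi-\ep}(u_1^\ep,u_2^\ep) - (4\pi-\ep)(\overline{u_1^\ep}+\overline{u_2^\ep}),
\]
combined with the uniform boundedness of $J$ and $4\pi-\ep \ge 2\pi$ for small $\ep$: if the mean sum diverges to $-\infty$ the gradient integral diverges to $+\infty$, and conversely.

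For (i) $\Leftrightarrow$ (ii) I would argue by contraposition, using Lemma \ref{lem-bound} first to observe that $e^{m_i^\ep}\ge \int_M e^{u_i^\ep}\ge C_1$, so $m_i^\ep\ge \log C_1$ and the negation of (i) is exactly that both $m_i^\ep$ stay bounded above. Under this negation, $e^{u_i^\ep}$ is uniformly bounded in $L^\infty$, hence the right-hand sides of \eqref{eq-uep} are uniformly bounded in $L^\infty$; since those right-hand sides have mean zero (using $|M|=1$ and $(u_1^\ep,u_2^\ep)\in\mathcal{H}$), standard $W^{2,p}$ regularity applied to the scalar Laplacian gives uniform $C^1$ bounds on $u_i^\ep - \overline{u_i^\ep}$, so the gradient integral stays bounded and (ii) fails. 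Conversely, if the gradient integral stays bounded, then by the already-established (ii) $\Leftrightarrow$ (iii) the mean sum stays bounded, and combined with $\overline{u_i^\ep} \le \log C_2$ this forces each $\overline{u_i^\ep}$ to be bounded; then $u_i^\ep$ is bounded in $H^1(M)$, the classical Moser--Trudinger inequality makes $e^{u_i^\ep}$ bounded in every $L^p$, and a bootstrap via elliptic regularity on \eqref{eq-uep} yields uniform $L^\infty$ bounds on $u_i^\ep$, so both $m_i^\ep$ stay bounded above and (i) fails.

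The main obstacle is the bootstrap in the direction ``gradient integral bounded $\Rightarrow m_i^\ep$ bounded above'': on a two-dimensional surface an $H^1$ bound does not directly yield an $L^\infty$ bound, so one must first invoke exponential integrability from the classical Moser--Trudinger inequality and then apply $W^{2,p}$ theory to the coupled right-hand sides, taking care that the cross-coupling between the two equations does not spoil the uniformity in $\ep$. Everything else is bookkeeping that exploits the Jost--Wang inequality being sharp precisely at $\rho_1=\rho_2=4\pi$, which makes the $\ep$-perturbation of the functional harmless as $\ep\to 0$.
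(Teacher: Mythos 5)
Your argument is correct, and it is essentially the route the paper has in mind: the paper gives no proof of this lemma, deferring to \cite[Section 2]{SZhu24+}, where the standard chain of ingredients is exactly the one you use — uniform two-sided bounds on $J_{4\pi-\ep,4\pi-\ep}(u_1^\ep,u_2^\ep)$ via a fixed competitor and the Jost--Wang inequality, the identity $\frac13\int_M Q = J_{4\pi-\ep,4\pi-\ep} - (4\pi-\ep)(\overline{u_1^\ep}+\overline{u_2^\ep})$ together with Jensen's bound $\overline{u_i^\ep}\le \log C_2$ for (ii)$\Leftrightarrow$(iii), and elliptic regularity plus Trudinger-type exponential integrability for (i)$\Leftrightarrow$(ii). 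The only cosmetic caveat is that the negation of (i) should be read along a subsequence, which the paper's stated convention of not distinguishing subsequences already covers.
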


\begin{definition}[Blow-Up]
We say $(u_1^\ep, u_2^\ep)$ blows up if any one of the conditions in Lemma \ref{lem-char} holds.
\end{definition}

If $(u_1^\ep,u_2^\ep)$ does not blow up, then by Lemma \ref{lem-char}, one can show that $(u_1^\ep,u_2^\ep)$ converges to some $(u_1,u_2)$ in $\mathcal{H}$ which minimizes $J_{4\pi,4\pi}$. The proof of Theorem \ref{thm-sz-2} terminates in this case. Therefore, without loss of generality we may assume $(u_1^\ep,u_2^\ep)$ blows up in the rest of this paper.

By Lemma \ref{lem-char} (iii), we divide the proof into the following three cases:

\textbf{Case 1} $\overline{u_1^\ep}\to-\infty$, $\overline{u_2^\ep}\geq-C$ as $\ep\to0$;

\textbf{Case 2} $\overline{u_1^\ep}\geq-C$, $\overline{u_2^\ep}\to-\infty$ as $\ep\to0$;

\textbf{Case 3} $\overline{u_1^\ep}\to-\infty$, $\overline{u_2^\ep}\to-\infty$ as $\ep\to0$.

Suppose we are in Case 1, by checking the proofs in \cite{SZhu24+} carefully, we find that $\rho_2<4\pi$ is used to show $\overline{u_2^\ep}\geq-C$ which happens to be the situation in Case 1. And at any other places $\rho_2<4\pi$ can be replaced by $\rho_2=4\pi$. By Theorem \ref{thm-sz-1}, if
$$\Delta \log h_1(x)+4\pi-2K(x)>0~~\text{for}~x\in M_1^+,$$
where $M_1^+=\{x\in M:~h_1(x)>0\}$, $J_{4\pi,4\pi}$ has a minimizer $(u_1,u_2)\in\mathcal{H}$ which satisfies \eqref{eq-jlw-2}.

Suppose we are in Case 2, similar as Case 1, we know that, if
$$\Delta \log h_2(x)+4\pi-2K(x)>0~~\text{for}~x\in M_2^+,$$
where $M_2^+=\{x\in M:~h_2(x)>0\}$, then $J_{4\pi,4\pi}$ has a minimizer $(u_1,u_2)\in\mathcal{H}$ which satisfies \eqref{eq-jlw-2}.

Suppose we are in Case 3, by Lemma \ref{lem-Ls}, there exist $G_i$, $i=1,2$ such that $u_i^\ep-\overline{u_i^\ep}\rightharpoonup G_i$ weakly in $W^{1,s}(M)$ for any $1<s<2$ as $\ep\to0$. Since $(e^{u_i^\ep})$ is bounded in $L^1(M)$ we may extract a subsequence (still denoted $e^{u_i^\ep}$) such that $e^{u_i^\ep}$ converges in the sense of measures on $M$ to some nonnegative bounded measure $\mu_i$ for $i=1,2$. We set
\begin{align*}
\gamma_1=8\pi h_1\mu_1-4\pi h_2\mu_2,~~\gamma_2=8\pi h_2\mu_2-4\pi h_1\mu_1
\end{align*}
and
\begin{align*}
S_i=\{x\in M: |\gamma_i(\{x\})|\geq4\pi\},~i=1,2.
\end{align*}
Let $S=S_1\cup S_2$. By Theorem 1 in \cite{BM}, we have
\begin{lem}
For any $\Omega\subset\subset M\setminus S$, there holds
\begin{align}\label{outbound}
u_i^\ep-\overline{u_i^\ep}~\text{is~uniformly~bounded~in~} \Omega,~i=1,2.
\end{align}
\end{lem}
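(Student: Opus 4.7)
Since $\Omega \subset\subset M \setminus S$ is compact, it suffices to produce for every $x_0 \notin S$ a neighborhood on which both $u_i^\epsilon - \overline{u_i^\epsilon}$ ($i=1,2$) are uniformly bounded; a finite cover then finishes. Fix such an $x_0$; by hypothesis $|\gamma_1(\{x_0\})|, |\gamma_2(\{x_0\})| < 4\pi$. I would pass to an isothermal chart about $x_0$ so that \eqref{eq-uep} reads $-\Delta u_i^\epsilon = f_i^\epsilon$ on a Euclidean disk, up to a smooth positive conformal factor that is harmless for the argument below. Then choose $r$ small and subsequently $\epsilon$ small so that $\|f_i^\epsilon\|_{L^1(B_r(x_0))} \leq 4\pi - \delta$ for some $\delta > 0$ and both $i$; this is the key local $L^1$ smallness feeding Brezis-Merle.

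Granting this $L^1$ bound, the argument is standard. Decompose $u_i^\epsilon = v_i^\epsilon + w_i^\epsilon$ on $B_r(x_0)$ with $-\Delta v_i^\epsilon = f_i^\epsilon$ and $v_i^\epsilon|_{\partial B_r} = 0$, and $w_i^\epsilon$ harmonic. Theorem~1 of \cite{BM} applied to $v_i^\epsilon$ yields $\int_{B_r} e^{p|v_i^\epsilon|} \leq C$ for some $p > 1$ uniformly in $\epsilon$. By Lemma \ref{lem-Ls}, $u_i^\epsilon - \overline{u_i^\epsilon}$ is bounded in $L^1(B_r)$, and together with the $L^1$ bound on $v_i^\epsilon$ this controls $w_i^\epsilon - \overline{u_i^\epsilon}$ in $L^1(B_r)$; the mean-value property for the harmonic function $w_i^\epsilon - \overline{u_i^\epsilon}$ then gives $\|w_i^\epsilon - \overline{u_i^\epsilon}\|_{L^\infty(B_{r/2})} \leq C$. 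Combining, $e^{p(u_i^\epsilon - \overline{u_i^\epsilon})}$ is uniformly in $L^1(B_{r/2})$; since Lemma \ref{lem-bound} and Jensen's inequality give $\overline{u_j^\epsilon}$ bounded from above, $h_j e^{u_j^\epsilon} \in L^p(B_{r/2})$ uniformly and hence $f_i^\epsilon \in L^p(B_{r/2})$. Interior elliptic estimates give $\|u_i^\epsilon - \overline{u_i^\epsilon}\|_{W^{2,p}(B_{r/4})} \leq C$, and $W^{2,p} \hookrightarrow C^0$ delivers the desired $L^\infty$ control.

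\textbf{Main obstacle.} The delicate point is verifying the local $L^1$ smallness of $f_i^\epsilon$. In the positive $h_j$ case of \cite{JLW06}, $h_j \mu_j \geq 0$ as measures and the naive bound on $|f_i^\epsilon|$ limits exactly to $|\gamma_i(\{x_0\})| < 4\pi$. With sign-changing $h_j$, however, the bound
\begin{align*}
|f_i^\epsilon| \leq 8\pi |h_1| e^{u_1^\epsilon} + 4\pi |h_2| e^{u_2^\epsilon} + O(1)
\end{align*}
has weak limit at $x_0$ equal to $8\pi|h_1(x_0)|\mu_1(\{x_0\}) + 4\pi|h_2(x_0)|\mu_2(\{x_0\})$, which need not fall below $4\pi$ just because $|\gamma_i(\{x_0\})| < 4\pi$ (the signed cancellations in $\gamma_i$ are invisible to the total variation). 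Closing the gap requires using \emph{both} conditions $|\gamma_1(\{x_0\})|, |\gamma_2(\{x_0\})| < 4\pi$ simultaneously with the Toda structure; for instance, one may pass to the decoupled equations
\begin{align*}
-\Delta(2u_1^\epsilon + u_2^\epsilon) = (12\pi - 3\epsilon)(h_1 e^{u_1^\epsilon} - 1), \qquad -\Delta(u_1^\epsilon + 2u_2^\epsilon) = (12\pi - 3\epsilon)(h_2 e^{u_2^\epsilon} - 1),
\end{align*}
which isolate a single exponential nonlinearity in each equation, and then recover $u_i^\epsilon = \tfrac{1}{3}\bigl(2(2u_i^\epsilon + u_j^\epsilon) - (u_i^\epsilon + 2u_j^\epsilon)\bigr)$ via H\"older's inequality from the Brezis-Merle integrability of the two diagonal combinations. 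Once this step is carried out, the rest of the proof is the routine Brezis-Merle-plus-bootstrap outlined above.
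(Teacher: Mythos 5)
Your overall architecture is the same as the paper's: localize near a point $x_0\notin S$, split off the part of $u_i^\ep$ solving the equation with zero boundary data, apply Theorem~1 of \cite{BM} to it, control the remainder (harmonic, or with $-\Delta=-(4\pi-\ep)$) through its $L^1$ norm via Lemma \ref{lem-Ls}, and bootstrap. That part is fine, and your use of the mean-value property for the harmonic remainder is an acceptable variant of the paper's appeal to the local maximum principle. You are also right that the only delicate point is the local $L^1$-smallness of the right-hand side; the paper itself simply asserts $\|f_\ep\|_{L^1(B_\delta(x_0))}<4\pi$ from $|\gamma_1(\{x_0\})|<4\pi$, so your instinct that cancellation inside $\gamma_i$ is invisible to the total variation is pointing at a real subtlety (and, incidentally, your parenthetical claim that positivity of $h_j$ makes the naive bound limit to $|\gamma_i(\{x_0\})|$ is not right either: with $a=h_1(x_0)\mu_1(\{x_0\})$, $b=h_2(x_0)\mu_2(\{x_0\})$ both positive, the total-variation limit is $8\pi a+4\pi b$, not $|8\pi a-4\pi b|$).

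The genuine gap is that your proposed repair does not reach the Brezis--Merle threshold. Writing $a=h_1(x_0)\mu_1(\{x_0\})$ and $b=h_2(x_0)\mu_2(\{x_0\})$, the hypothesis $x_0\notin S$ gives $|2a-b|<1$ and $|2b-a|<1$ (in units of $4\pi$), whence, from $a=\tfrac{1}{3}\bigl(2(2a-b)+(2b-a)\bigr)$, only $|a|<1$ and $|b|<1$. Your decoupled equation $-\Delta(2u_1^\ep+u_2^\ep)=(12\pi-3\ep)(h_1e^{u_1^\ep}-1)$ has right-hand side whose $L^1$ mass on $B_\delta(x_0)$ tends to $12\pi|a|$ as $\ep\to0$, $\delta\to0$; Theorem~1 of \cite{BM} requires this to be strictly below $4\pi$, i.e.\ $|a|<1/3$, which is not implied by $x_0\notin S$ (take $a=b=1-\eta$ for small $\eta>0$: both defining inequalities of $S$ hold, yet $12\pi|a|\approx 12\pi$). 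The same arithmetic shows the undecoupled bound $8\pi|a|+4\pi|b|<4\pi$ also fails in this range, so the H\"older recombination step cannot be carried out and the key estimate is not established. To close this you would have to either enlarge the singular set to $\widetilde S=\{x: 2|h_1(x)|\mu_1(\{x\})+|h_2(x)|\mu_2(\{x\})\geq 1 \text{ or } 2|h_2(x)|\mu_2(\{x\})+|h_1(x)|\mu_1(\{x\})\geq 1\}$ (still finite, and containing $S$, so the rest of the paper's argument survives with $S$ replaced by $\widetilde S$), or supply an actual argument ruling out points of $M\setminus S$ at which the total-variation mass reaches $4\pi$; as written, the proposal proves uniform boundedness only on $M\setminus\widetilde S$, not on $M\setminus S$.
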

\begin{proof}
$\forall x\in M\setminus S$, we have $B_\delta(x)\subset\subset M\setminus S$ for sufficiently small $\delta>0$. Consider the equation
\begin{align*}
\begin{cases}
-\Delta w_{1}^\ep = (8\pi-2\ep) h_1e^{u_1^\ep}-(4\pi-\ep)h_{2}e^{u_{2}^\ep}:=f_\ep &\text{in}~~B_\delta(x),\\
w_{1}^\ep = 0 &\text{on}~~\partial B_\delta(x).
\end{cases}
\end{align*}
Since $|\gamma_1(\{x\})|<4\pi$, we have $\|f_\ep\|_{L^1(B_{\delta}(x))}<4\pi$ for sufficiently small $\ep>0$ and $\delta>0$. Fix such a $\delta$.
Define $w_2^\ep=u_1^\ep-\overline{u_1^\ep}-w_1^\ep$, then $-\Delta w_2^\ep = -(4\pi-\ep)$ in $B_\delta(x)$.
By Theorem 4.1 in \cite{HL} and Lemma \ref{lem-Ls}, we have
\begin{align*}
\sup_{B_{\delta/2}(x)}w_2^\ep \leq& C\left(\|w_2^\ep\|_{L^1(B_\delta(x))}+C\right)\\
\leq& C\left(\|u_1^\ep-\overline{u_1^\ep}\|_{L^1(M)}+\|v_1^\ep\|_{L^1(B_{\delta}(x))}+C\right)\\
\leq& C\left(\|\nabla u_1^\ep\|_{L^s(M)}+\|w_1^\ep\|_{L^1(B_{\delta}(x))}+C\right)\\
\leq& C\left(\|w_1^\ep\|_{L^1(B_{\delta}(x))}+C\right).
\end{align*}
It follows from Theorem 1 in \cite{BM} that $e^{s_1|w_1^\ep|}$ is bounded in $B_{\delta}(x)$ for some $s_1>1$, which yields that
$$\|w_1^\ep\|_{L^1(B_\delta(x))}\leq C.$$
So we have
$$\sup_{B_{\delta/2}(x)}w_2^\ep\leq C.$$
Then
\begin{align*}
\int_{B_{\delta/2}(x)}e^{s_1u_1^\ep}=&\int_{B_{\delta/2}(x)}e^{s_1\overline{u_1^\ep}}e^{s_1w_2^\ep}e^{s_1w_1^\ep}\\
\leq& C\int_{B_{\delta/2}(x)}e^{s_1|w_1^\ep|}\\
\leq& C.
\end{align*}
Similarly, we have $\int_{B_{\delta/2}(x)}e^{s_2u_2^\ep}\leq C$ for some $s_2>1$ and sufficiently small $\delta>0$. Then \eqref{outbound} follows from the standard elliptic estimates and we finish the proof.
\end{proof}

Since $(u_1^\ep, u_2^\ep)$ blows up, we know $S$ is not empty. Otherwise, by using a finite covering argument and inequality \eqref{outbound}, we would have $\|u_i^\ep - \overline{u_i^\ep}\|_{L^\infty(M)} \leq C$, which implies $m^\ep_i \leq C$. This contradicts Lemma \ref{lem-char} (i). By the definition of $S$, for any $x \in S$,
$$\mu_1(\{x\}) \geq \frac{1}{4\max_M |h_1|} \quad \text{or} \quad \mu_2(\{x\}) \geq \frac{1}{4\max_M |h_2|}.$$
In view of $\mu_1$ and $\mu_2$ are bounded, $S$ is a finite set. We denote $S=\{x_l\}_{l=1}^L$.
It follows from
\eqref{outbound} and Fatou's lemma that
\begin{lem}\label{lem-mu_i} We have
\begin{align}\label{mu_i=}
\mu_i=\sum_{l=1}^L\mu_i(\{x_l\})\delta_{x_l},~~i=1,2,
\end{align}
where $\delta_{x}$ is the Dirac distribution.
\end{lem}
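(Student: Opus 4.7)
The plan is to show that the support of the bounded nonnegative measure $\mu_i$ is contained in the finite set $S=\{x_1,\ldots,x_L\}$; once this is established, writing $\mu_i$ as the claimed finite sum of Dirac masses is immediate, since a bounded nonnegative Radon measure supported on finitely many points must equal the corresponding linear combination of the Dirac measures with the masses of each atom as coefficients.

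To this end I would fix an arbitrary compact set $K \subset M \setminus S$. The inequality \eqref{outbound} provides a constant $C_K$, depending on $K$ but not on $\ep$, such that $u_i^\ep(x) \leq \overline{u_i^\ep} + C_K$ for every $x \in K$ and every sufficiently small $\ep > 0$. Since we are in Case 3 we have $\overline{u_i^\ep} \to -\infty$, and therefore $e^{u_i^\ep} \leq e^{C_K} e^{\overline{u_i^\ep}} \to 0$ uniformly on $K$. In particular $\int_K e^{u_i^\ep} \, dv_g \to 0$.

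Now I would invoke the standard upper-semicontinuity property of weak-$\ast$ convergence of nonnegative Radon measures on compact sets (which is exactly where Fatou's lemma enters): from $e^{u_i^\ep}\,dv_g \to \mu_i$ in the sense of measures one deduces
\[
\mu_i(K) \leq \limsup_{\ep \to 0} \int_K e^{u_i^\ep} \, dv_g = 0.
\]
Since $K \subset M \setminus S$ was arbitrary and $\mu_i$ is a bounded Radon measure on $M$, inner regularity yields $\mu_i(M \setminus S) = 0$. Combined with the fact that $S = \{x_1,\ldots,x_L\}$ is a finite set, this forces $\mu_i = \sum_{l=1}^{L}\mu_i(\{x_l\})\,\delta_{x_l}$, which is \eqref{mu_i=}.

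I do not anticipate any serious obstacle in carrying this out: the argument needs only the two ingredients already in hand, namely the uniform bound \eqref{outbound} on $u_i^\ep - \overline{u_i^\ep}$ away from $S$ and the defining property of Case 3 that $\overline{u_i^\ep} \to -\infty$. Together these force $e^{u_i^\ep}$ to vanish uniformly on every compact subset of $M \setminus S$, and the passage from pointwise smallness of $e^{u_i^\ep}$ to zero mass of $\mu_i$ outside $S$ is the only (entirely routine) measure-theoretic step.
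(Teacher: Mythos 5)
Your argument is correct and is essentially the paper's own proof in slightly different measure-theoretic packaging: the paper splits $\int_V e^{u_i^\ep}$ for a closed set $V$ into the part near $S$ and the part away from $S$, kills the latter using \eqref{outbound} together with $\overline{u_i^\ep}\to-\infty$, and then sends $\ep\to0$ and $r\to0$, while you reach the same conclusion via the portmanteau inequality $\mu_i(K)\leq\limsup_\ep\int_K e^{u_i^\ep}$ on compact $K\subset M\setminus S$ plus inner regularity. The two ingredients used are identical, so this is the same proof.
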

\begin{proof}
For any closed set $V\subset M$, we need to show
\begin{align}\label{mu_i-1}
\mu_i(V) = \sum_{l=1}^L\mu_i(V\cap\{x_l\}), ~i=1,2.
\end{align}
In fact, we have $B_r(x_l)\cap B_r(x_m)=\emptyset$ for sufficiently small $r$ and $l\neq m\in\{1,\cdots,L\}$.
Then
\begin{align}\label{mu_i-2}
\int_V e^{u^\ep_i} =& \int_{V\setminus \bigcup_{l=1}^L B_r(x_l)} e^{u^\ep_i} + \int_{V\cap\left(\bigcup_{l=1}^L B_r(x_l)\right)} e^{u^\ep_i}\nonumber\\
=& \int_{V\setminus \bigcup_{l=1}^L B_r(x_l)} e^{u^\ep_i-\overline{u_i^\ep}} e^{\overline{u_i^\ep}}+ \sum_{l=1}^L\int_{V\cap B_r(x_l)} e^{u^\ep_i}.
\end{align}
Since $\overline{u_i^\ep}\to-\infty$ and \eqref{outbound}, it follows from Fatou's lemma that
$$\liminf_{\ep\to0}\int_{V\setminus \bigcup_{l=1}^L B_r(x_l)} e^{u^\ep_i-\overline{u_i^\ep}} e^{\overline{u_i^\ep}}=0.$$
Letting $\ep\to0$ in both sides of \eqref{mu_i-2} first and then $r\to0$, we obtain \eqref{mu_i-1} and finish the proof.
\end{proof}

It follows from Lemma \ref{lem-bound} and \eqref{mu_i=} that ${\rm{supp}}\mu_i\neq\emptyset$, $i=1,2$. If there are at least two points in each ${\rm{supp}}\mu_i$, then by the improved Moser-Trudinger inequality for Toda system (cf. \cite[Proposition 2.5]{MR13}), for any $\ep'>0$, there exists some $C=C(\ep')>0$ such that
\begin{align*}
\log\int_Me^{u_1^\ep}+\log\int_Me^{u_2^\ep}&\leq\frac{1+\ep'}{24\pi}\int_M(|\nabla u_1^\ep|^2+\nabla u_1^\ep\nabla u_2^\ep+|\nabla u_2^\ep|^2)+\overline{u_1^\ep}+\overline{u_2^\ep}+C.
\end{align*}
By choosing $\ep'=1/3$ and using Lemma \ref{lem-bound}, we have
\begin{align*}
\frac{1}{3}\int_M(|\nabla u_1^\ep|^2+\nabla u_1^\ep\nabla u_2^\ep+|\nabla u_2^\ep|^2)&\geq-6\pi(\overline{u_1^\ep}+\overline{u_2^\ep})-C.
\end{align*}
This, combining with the fact that $J_{4\pi-\ep,4\pi-\ep}(u_1^\ep,u_2^\ep)$ is bounded, shows that
\begin{align*}
\overline{u_1^\ep}+\overline{u_2^\ep}&\geq-C,
\end{align*}
which contradicts the assumption that $(u_1^\ep,u_2^\ep)$ blows up. Hence, either ${\rm{supp}}\mu_1$ or ${\rm{supp}}\mu_2$ has only one point. Without loss of generality, we assume that ${\rm{supp}}\mu_1$ has only one point and ${\rm{supp}}\mu_1=\{x_1\}$ since ${\rm{supp}}\mu_1\subset S$. By noticing that $\int_Mh_1e^{u_1^\ep}=1$, we have
\begin{align}\label{h_1mu_1}
h_1\mu_1=\delta_{x_1}.
\end{align}

The following result which is based on Pohozaev identities is very important in the understanding of blow-up set.
\begin{lem}\label{lem-poho}
Denote by $h_i\mu_i=\sigma_i$ for $i=1,2$, we have
\begin{align}\label{pohozaev8}
\sigma_1^2(\{x_l\})+\sigma_2^2(\{x_l\})-\sigma_1(\{x_l\})\sigma_2(\{x_l\})=\sigma_1(\{x_l\})+\sigma_2(\{x_l\}),
\end{align}
where $l=1,2,\cdots,L$.
\end{lem}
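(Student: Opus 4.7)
The plan is to derive \eqref{pohozaev8} by applying a Pohozaev-type identity to the system \eqref{eq-uep} in a small ball around each blow-up point $x_l$, passing to the limit $\ep\to 0$, and then shrinking the radius $r\to 0$; this adapts the strategy of Jost-Lin-Wang \cite{JLW06} to the present sign-changing setting. I fix $l$, choose isothermal coordinates centered at $x_l$ with $g=e^{2\phi}|dx|^2$ and $\phi(0)=0$, and pick $r>0$ so small that $\overline{B_r(x_l)}\cap S=\{x_l\}$. In these coordinates \eqref{eq-uep} reads $-\Delta u_i^\ep=\sum_j A_{ij}F_j^\ep$ with the $SU(3)$ Cartan matrix $A=\bigl(\begin{smallmatrix}2&-1\\-1&2\end{smallmatrix}\bigr)$ and $F_j^\ep=(4\pi-\ep)e^{2\phi}(h_j e^{u_j^\ep}-1)$. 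I then multiply the $i$-th equation by $\sum_j(A^{-1})_{ij}(x\cdot\nabla u_j^\ep)$ and sum over $i$: using $\sum_i(A^{-1})_{ij}A_{ik}=\delta_{jk}$, the right-hand side collapses to $\sum_k\int_{B_r}F_k^\ep(x\cdot\nabla u_k^\ep)\,dx$, while the standard 2D Pohozaev computation, exploiting the symmetry of $A^{-1}$, reduces the left-hand side to a pure boundary integral on $\partial B_r$ featuring $\partial_\nu u_i^\ep$, $x\cdot\nabla u_j^\ep$ and $\nabla u_i^\ep\cdot\nabla u_j^\ep$ weighted by $(A^{-1})_{ij}$.

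Next I evaluate both sides in the double limit $\ep\to 0$ followed by $r\to 0$. For the bulk right-hand side, writing $x\cdot\nabla e^{u_k^\ep}=\nabla\cdot(xe^{u_k^\ep})-2e^{u_k^\ep}$ and integrating by parts produces a boundary term that vanishes as $\ep\to 0$, since \eqref{outbound} combined with $\overline{u_k^\ep}\to-\infty$ forces $e^{u_k^\ep}\to 0$ smoothly on $\partial B_r$; a main term converging to $-2e^{2\phi(0)}h_k(0)\mu_k(\{x_l\})=-2\sigma_k(\{x_l\})$ by Lemma \ref{lem-mu_i}; and a term whose coefficient $x\cdot\nabla(e^{2\phi}h_k)$ vanishes at $x_l$. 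The $-1$ contribution is $O(r^{2-2/s})$ by Lemma \ref{lem-Ls}. Multiplying by $4\pi-\ep$ and summing, the right-hand side tends to $-8\pi(\sigma_1(\{x_l\})+\sigma_2(\{x_l\}))$. For the boundary left-hand side, on $\partial B_r$ the right-hand sides of \eqref{eq-uep} converge smoothly to $-4\pi$ (as $h_j e^{u_j^\ep}\to 0$ smoothly off $S$), hence elliptic regularity gives $u_i^\ep-\overline{u_i^\ep}\to G_i$ in $C^2(\partial B_r)$, where $G_i$ solves $-\Delta G_i=\gamma_i-4\pi$ on $M$ and admits near $x_l$ the local expansion $G_i(x)=-\frac{\gamma_i(\{x_l\})}{2\pi}\log|x|+O(1)$. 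Substituting the leading part $\nabla G_i\sim-\frac{\gamma_i(\{x_l\})}{2\pi}\,\frac{x}{|x|^2}$ into the boundary integrals, using $A^{-1}=\frac{1}{3}\bigl(\begin{smallmatrix}2&1\\1&2\end{smallmatrix}\bigr)$, and letting $r\to 0$ produces the limit $-\frac{1}{6\pi}\bigl(\gamma_1(\{x_l\})^2+\gamma_1(\{x_l\})\gamma_2(\{x_l\})+\gamma_2(\{x_l\})^2\bigr)$.

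Equating the two limits gives $\gamma_1(\{x_l\})^2+\gamma_1(\{x_l\})\gamma_2(\{x_l\})+\gamma_2(\{x_l\})^2=48\pi^2\bigl(\sigma_1(\{x_l\})+\sigma_2(\{x_l\})\bigr)$. Substituting $\gamma_1(\{x_l\})=8\pi\sigma_1(\{x_l\})-4\pi\sigma_2(\{x_l\})$ and $\gamma_2(\{x_l\})=8\pi\sigma_2(\{x_l\})-4\pi\sigma_1(\{x_l\})$, a short algebraic simplification reveals that the left-hand side equals $48\pi^2\bigl(\sigma_1(\{x_l\})^2+\sigma_2(\{x_l\})^2-\sigma_1(\{x_l\})\sigma_2(\{x_l\})\bigr)$, from which \eqref{pohozaev8} follows. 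The most delicate step is the boundary analysis of the left-hand side: it requires both the $C^2$ convergence of $u_i^\ep-\overline{u_i^\ep}$ to $G_i$ on $\partial B_r$ and the sharp identification of the logarithmic atom of $G_i$ at $x_l$, for which I will invoke the atomic structure \eqref{mu_i=}, the concentration \eqref{h_1mu_1}, and the smooth decay of $h_j e^{u_j^\ep}$ away from $S$. The sign-changing nature of $h_i$ plays no role in the local Pohozaev argument, since only the signed point mass $\sigma_i(\{x_l\})=h_i(x_l)\mu_i(\{x_l\})$ enters.
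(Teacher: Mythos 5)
Your proposal is correct and follows essentially the same route as the paper: a Pohozaev identity on a small ball about $x_l$ for the system \eqref{eq-uep}, the $C^2_{\mathrm{loc}}(M\setminus S)$ convergence of $u_i^\ep-\overline{u_i^\ep}$ to $G_i$ together with the logarithmic atom $-\frac{\gamma_i(\{x_l\})}{2\pi}\log r$, and the same final algebra; your inverse-Cartan-matrix weighting is exactly the paper's combination $2\cdot\eqref{pohozaev5-1}+\eqref{pohozaev5-2}+\eqref{pohozaev5-3}+2\cdot\eqref{pohozaev5-4}$ (up to the factor $3$), and your direct $O(r^{3-2/s})$ estimate of the constant term replaces the paper's corrector $\zeta$ to the same effect.
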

\begin{proof}
Using \eqref{outbound}, \eqref{mu_i=} and \eqref{h_1mu_1}, we have $G_1$ and $G_2$ satisfy the following equation
\begin{align}\label{eq-green-1}
\begin{cases}
-\Delta G_1 = 8\pi(\delta_{x_1}-1)-4\pi(h_2\sum_{l=1}^L\mu_2(\{x_l\})\delta_{x_l}-1),\\
-\Delta G_2 = 8\pi(h_2\sum_{l=1}^L\mu_2(\{x_l\})\delta_{x_l}-1)-4\pi(\delta_{x_1}-1),\\
\int_MG_1=\int_MG_2=0.
\end{cases}
\end{align}
It follows from standard elliptic estimates that
\begin{align}\label{converge-out}
u_i^\ep-\overline{u_i^\ep}\to G_i~~\text{in}~~C^2_{\text{loc}}(M\setminus S),~i=1,2.
\end{align}
Let $(B_{\delta}(x_l);(x^1,x^2))$ be an isothermal coordinate system around $x_l$ and we assume the metric to be
$$g|_\Omega = e^{\phi}((dx^1)^2+(dx^2)^2)$$
with $\phi(0)=0$ and $\nabla_{\mathbb{R}^2}\phi(0)=0$. Here $\nabla_{\mathbb{R}^2}=(\frac{\p}{\partial x^1},\frac{\p}{\partial x^2})$.
It is well known that
\begin{align}\label{pohozaev0}
G_i = -\frac{\gamma_i(\{x_l\})}{2\pi}\log r + \psi_i,~~i=1,2,
\end{align}
where $r=\sqrt{(x^1)^2+(x^2)^2}$ and $\psi_i$ is a smooth function near $x_l$. In this coordinate system, \eqref{eq-uep} can be reduced to
\begin{align}\label{pohozaev1}
\begin{cases}
-\Delta_{\mathbb{R}^2}u_1^\ep = (8\pi-2\ep)e^{\phi}(h_1e^{u_1^\ep}-1)-(4\pi-\ep)e^{\phi}(h_2e^{u_2^\ep}-1),\\
-\Delta_{\mathbb{R}^2}u_2^\ep = (8\pi-2\ep)e^{\phi}(h_2e^{u_2^\ep}-1)-(4\pi-\ep)e^{\phi}(h_1e^{u_1^\ep}-1)
\end{cases}
\end{align}
for $|x|\leq\delta$, where $\Delta_{\mathbb{R}^2}=\frac{\p^2}{\p(x^1)^2}+\frac{\p^2}{\p(x^2)^2}$ is the Laplacian in $\mathbb{R}^2$.
We set
\begin{align*}
\hat{u}_i^\ep(x) = u_i^\ep(x)-(4\pi-\ep)\zeta(x),
\end{align*}
where $\zeta(x)$ satisfies
\begin{align*}
\begin{cases}
\Delta_{\mathbb{R}^2}\zeta = e^{\phi(x)}~~\text{for}~|x|\leq\delta,\\
\zeta(0)=0~~\text{and}~~\nabla_{\mathbb{R}^2}\zeta(0)=0.
\end{cases}
\end{align*}
It is clear that $\zeta(x)=O(|x|^2)$ for $|x|\leq\delta$. By \eqref{pohozaev1} we know $\hat{u}_i^\ep$ satisfies
\begin{align}\label{pohozaev2}
\begin{cases}
-\Delta_{\mathbb{R}^2}\hat{u}_1^\ep = (8\pi-2\ep)\hat{h}_1e^{\hat{u}_1^\ep}-(4\pi-\ep)\hat{h}_2e^{\hat{u}_2^\ep},\\
-\Delta_{\mathbb{R}^2}\hat{u}_2^\ep = (8\pi-2\ep)\hat{h}_2e^{\hat{u}_2^\ep}-(4\pi-\ep)\hat{h}_1e^{\hat{u}_1^\ep}
\end{cases}
\end{align}
for $|x|\leq\delta$, where
\begin{align}\label{pohozaev3}
\hat{h}_i(x) = e^{\phi(x)}h_i(x)e^{(4\pi-\ep)\zeta(x)},~~i=1,2.
\end{align}
It follows from the choice of $\phi(x)$ and \eqref{pohozaev3} that
\begin{align}\label{pohozaev4}
\hat{h}_i(0) = h_i(x_l)~~\text{and}~~\nabla_{\mathbb{R}^2} \hat{h}_i(0) = \nabla h_i(x_l).
\end{align}
From equation \eqref{pohozaev2} we have the Pohozaev identities as follows:
\begin{align}\label{pohozaev5-1}
&-\delta\int_{\p\mathbb{B}_{\delta}(0)}\left(\left(\frac{\p\hat{u}_1^\ep}{\p r}\right)^2-\frac{1}{2}|\nabla_{\mathbb{R}^2}\hat{u}_1^\ep|^2\right)ds\nonumber\\
=& (8\pi-2\ep)\delta\int_{\p\mathbb{B}_{\delta}(0)}\hat{h}_1e^{\hat{u}_1^\ep}ds
  - (8\pi-2\ep)\int_{\mathbb{B}_{\delta}(0)}(2\hat{h}_1e^{\hat{u}_1^\ep}+x\cdot\nabla_{\mathbb{R}^2}\hat{h}_1e^{\hat{u}_1^\ep})dx\nonumber\\
  &- (4\pi-\ep)\int_{\mathbb{B}_{\delta}(0)}x\cdot\nabla_{\mathbb{R}^2}\hat{u}_1^\ep \hat{h}_2e^{\hat{u}_2^\ep}dx,
\end{align}
and
\begin{align}\label{pohozaev5-2}
&-\delta\int_{\p\mathbb{B}_{\delta}(0)}\frac{\p\hat{u}_1^\ep}{\p r}\frac{\p\hat{u}_2^\ep}{\p r}ds
+\int_{\mathbb{B}_{\delta}(0)}\left(\nabla_{\mathbb{R}^2}\hat{u}_1^\ep\nabla_{\mathbb{R}^2}\hat{u}_2^\ep
+\sum_{j=1}^2x\cdot\left(\nabla_{\mathbb{R}^2}\frac{\p \hat{u}_2^\ep}{\p x^j}\right)\frac{\p \hat{u}_1^\ep}{\p x^j}\right)dx\nonumber\\
=& -(4\pi-\ep)\delta\int_{\p\mathbb{B}_{\delta}(0)}\hat{h}_2e^{\hat{u}_2^\ep}ds
   +(8\pi-2\ep)\int_{\mathbb{B}_{\delta}(0)}\left(\hat{h}_2e^{\hat{u}_2^\ep}+x\cdot\nabla_{\mathbb{R}^2}\hat{u}_2^\ep \hat{h}_1e^{\hat{u}_1^\ep}\right)dx\nonumber\\
  &+(4\pi-\ep)\int_{\mathbb{B}_{\delta}(0)}x\cdot\nabla_{\mathbb{R}^2}\hat{h}_2e^{\hat{u}_2^\ep}dx,
\end{align}
and
\begin{align}\label{pohozaev5-3}
&-\delta\int_{\p\mathbb{B}_{\delta}(0)}\frac{\p\hat{u}_1^\ep}{\p r}\frac{\p\hat{u}_2^\ep}{\p r}ds
+\int_{\mathbb{B}_{\delta}(0)}\left(\nabla_{\mathbb{R}^2}\hat{u}_1^\ep\nabla_{\mathbb{R}^2}\hat{u}_2^\ep
+\sum_{j=1}^2x\cdot\left(\nabla_{\mathbb{R}^2}\frac{\p \hat{u}_1^\ep}{\p x^j}\right)\frac{\p \hat{u}_2^\ep}{\p x^j}\right)dx\nonumber\\
=& -(4\pi-\ep)\delta\int_{\p\mathbb{B}_{\delta}(0)}\hat{h}_1e^{\hat{u}_1^\ep}ds
   +(8\pi-2\ep)\int_{\mathbb{B}_{\delta}(0)}\left(\hat{h}_1e^{\hat{u}_1^\ep}+x\cdot\nabla_{\mathbb{R}^2}\hat{u}_1^\ep \hat{h}_2e^{\hat{u}_2^\ep}\right)dx\nonumber\\
  &+(4\pi-\ep)\int_{\mathbb{B}_{\delta}(0)}x\cdot\nabla_{\mathbb{R}^2}\hat{h}_1e^{\hat{u}_1^\ep}dx,
\end{align}
and
\begin{align}\label{pohozaev5-4}
&-\delta\int_{\p\mathbb{B}_{\delta}(0)}\left(\left(\frac{\p\hat{u}_2^\ep}{\p r}\right)^2-\frac{1}{2}|\nabla_{\mathbb{R}^2}\hat{u}_2^\ep|^2\right)ds\nonumber\\
=& (8\pi-2\ep)\delta\int_{\p\mathbb{B}_{\delta}(0)}\hat{h}_2e^{\hat{u}_2^\ep}ds
  - (8\pi-2\ep)\int_{\mathbb{B}_{\delta}(0)}(2\hat{h}_2e^{\hat{u}_2^\ep}+x\cdot\nabla_{\mathbb{R}^2}\hat{h}_2e^{\hat{u}_2^\ep})dx\nonumber\\
  &- (4\pi-\ep)\int_{\mathbb{B}_{\delta}(0)}x\cdot\nabla_{\mathbb{R}^2}\hat{u}_2^\ep \hat{h}_1e^{\hat{u}_1^\ep}dx.
\end{align}
Two times both sides of \eqref{pohozaev5-1} and \eqref{pohozaev5-4} and then plus each sides of them with \eqref{pohozaev5-2} and \eqref{pohozaev5-3}, we have
\begin{align}\label{pohozaev6}
&-2\delta\int_{\p\mathbb{B}_{\delta}(0)}\left(\left(\frac{\p\hat{u}_1^\ep}{\p r}\right)^2+\left(\frac{\p\hat{u}_2^\ep}{\p r}\right)^2+\frac{\p\hat{u}_1^\ep}{\p r}\frac{\p\hat{u}_2^\ep}{\p r}\right)ds\nonumber\\
&+\delta\int_{\p\mathbb{B}_{\delta}(0)}\left(|\nabla_{\mathbb{R}^2}\hat{u}_1^\ep|^2+|\nabla_{\mathbb{R}^2}\hat{u}_2^\ep|^2
+\nabla_{\mathbb{R}^2}\hat{u}_1^\ep\nabla_{\mathbb{R}^2}\hat{u}_2^\ep\right)ds\nonumber\\
=&3(4\pi-\ep)\delta\int_{\p\mathbb{B}_{\delta}(0)}\left(\hat{h}_1e^{\hat{u}_1^\ep}+\hat{h}_2e^{\hat{u}_2^\ep}\right)ds
  -6(4\pi-\ep)\int_{\mathbb{B}_{\delta}(0)}\left(\hat{h}_1e^{\hat{u}_1^\ep}+\hat{h}_2e^{\hat{u}_2^\ep}\right)dx\nonumber\\
 &-3(4\pi-\ep)\int_{\mathbb{B}_{\delta}(0)}\left(x\cdot\nabla_{\mathbb{R}^2}\hat{h}_1e^{\hat{u}_1^\ep}+x\cdot\nabla_{\mathbb{R}^2}\hat{h}_2e^{\hat{u}_2^\ep}\right)dx.
\end{align}
Letting $\ep\to0$ first and then $\delta\to0$ in \eqref{pohozaev6}, by using \eqref{converge-out}, \eqref{pohozaev0}, \eqref{pohozaev3} and \eqref{pohozaev4} we conclude
\begin{align}\label{pohozaev7}
 &-2\pi\left[\left(\frac{\gamma_1(\{x_l\})}{2\pi}\right)^2+\left(\frac{\gamma_2(\{x_l\})}{2\pi}\right)^2+\frac{\gamma_1(\{x_l\})}{2\pi}\frac{\gamma_2(\{x_l\})}{2\pi}\right]\nonumber\\
=&-24\pi\left[h_1(x_l)\mu_1(\{x_l\})+h_2(x_l)\mu_2(\{x_l\})\right].
\end{align}
Recalling that $h_i\mu_i=\sigma_i$ for $i=1,2$, then \eqref{pohozaev7} reduces to \eqref{pohozaev8}, this ends the proof.
\end{proof}

Now we show by Lemma \ref{lem-poho} that ${\rm{supp}}\mu_2$ also has one point which is different with $x_1$.

We know from \eqref{h_1mu_1} that $\sigma_1(\{x_1\})=1$ and $\sigma_1(\{x_l\})=0$ for any $l\geq2$, taking this fact into \eqref{pohozaev8} we obtain that
\begin{align*}
&\sigma_2(\{x_1\})=0~~\text{or}~~\sigma_2(\{x_1\})=2;\\
&\sigma_2(\{x_l\})=0~~\text{or}~~\sigma_2(\{x_l\})=1,~~\forall l\geq2.
\end{align*}
Combining it with $\sigma_2(M)=\int_{M}h_2e^{u_2^\ep}=1$, we have
\begin{align*}
\sigma_2(\{x_m\})=1~~\text{for~some}~m\geq2~~\text{and}~\sigma_2(\{x_l\})=0~~\forall l\in\{1,\cdots,L\}\setminus\{m\}.
\end{align*}
Without loss of generality, we assume $m=2$. Then we have
\begin{align}\label{h_2mu_2}
h_2\mu_2=\sigma_2=\delta_{x_2}.
\end{align}
We would like to collect \eqref{h_1mu_1} and \eqref{h_2mu_2} as the following lemma.
\begin{lem}\label{mu_1mu_2}
It holds that $h_1\mu_1=\delta_{x_1}$ and $h_2\mu_2=\delta_{x_2}$ with $x_1\neq x_2$.
\end{lem}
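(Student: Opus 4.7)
The plan is to combine the pointwise Pohozaev relation \eqref{pohozaev8} with the total mass identity for $\sigma_2$ in order to determine $h_2\mu_2$ completely. Since \eqref{h_1mu_1} already gives $\sigma_1(\{x_1\})=1$, and since $\mathrm{supp}\,\mu_1=\{x_1\}$ together with $\sigma_1=h_1\mu_1$ forces $\sigma_1(\{x_l\})=0$ for every $l\geq 2$, substituting these values into \eqref{pohozaev8} reduces the Pohozaev relation at each blow-up point to a scalar quadratic in $\sigma_2(\{x_l\})$ alone.

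First I would evaluate \eqref{pohozaev8} at $l=1$ using $\sigma_1(\{x_1\})=1$, which collapses to
\[
\sigma_2^2(\{x_1\}) - 2\sigma_2(\{x_1\}) = 0,
\]
so $\sigma_2(\{x_1\})\in\{0,2\}$. Next, for each $l\geq 2$, substituting $\sigma_1(\{x_l\})=0$ yields
\[
\sigma_2^2(\{x_l\}) - \sigma_2(\{x_l\}) = 0,
\]
so $\sigma_2(\{x_l\})\in\{0,1\}$. I would then invoke Lemma \ref{lem-bound} (with the continuity of $h_2$) to pass to the limit in $\int_M h_2 e^{u_2^\ep}=1$ and conclude $\sigma_2(M)=1$; combining this with Lemma \ref{lem-mu_i}, which represents $\mu_2$ as a finite sum of Diracs supported in $S$, gives $\sum_{l=1}^L\sigma_2(\{x_l\})=1$.

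The decisive step is to rule out $\sigma_2(\{x_1\})=2$: since every remaining admissible value of $\sigma_2(\{x_l\})$ is $0$ or $1$ and in particular nonnegative, the value $2$ at $x_1$ would force $\sum_{l\geq 2}\sigma_2(\{x_l\})=-1$, impossible. Hence $\sigma_2(\{x_1\})=0$, and the constraint $\sum_l\sigma_2(\{x_l\})=1$ forces exactly one index $l\geq 2$ to satisfy $\sigma_2(\{x_l\})=1$ with all other values zero. Relabeling this index as $2$ and writing $x_2\neq x_1$, Lemma \ref{lem-mu_i} then yields
\[
h_2\mu_2 = \sum_{l=1}^L \sigma_2(\{x_l\})\,\delta_{x_l} = \delta_{x_2},
\]
which together with \eqref{h_1mu_1} is precisely the claim. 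The main subtlety, beyond the algebra, is that $\sigma_2$ a priori could be signed because $h_2$ changes sign; the Pohozaev quadratic however constrains each $\sigma_2(\{x_l\})$ to a nonnegative value, and it is exactly this nonnegativity that drives the elimination of the $\sigma_2(\{x_1\})=2$ branch and the uniqueness of the second blow-up mass.
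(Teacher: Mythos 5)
Your proposal is correct and follows essentially the same route as the paper: substitute $\sigma_1(\{x_1\})=1$, $\sigma_1(\{x_l\})=0$ ($l\geq 2$) into the Pohozaev relation \eqref{pohozaev8} to get $\sigma_2(\{x_1\})\in\{0,2\}$ and $\sigma_2(\{x_l\})\in\{0,1\}$, then use $\sigma_2(M)=1$ together with Lemma \ref{lem-mu_i} to eliminate the value $2$ and single out one atom $x_2\neq x_1$. Your write-up merely makes explicit the elimination step that the paper leaves implicit.
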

To do blow-up analysis near $x_i$ for $i=1,2$, one still needs the upper bound of $u^\ep_j$ for $j\in\{1,2\}\setminus\{i\}$ near $x_i$. In fact, we have
\begin{lem}\label{upperbound}
Suppose $r$ is a positive number which is less than $\text{dist}(x_1,x_2)/2$ and makes $h_i>0$ in $B_r(x_i)$ for $i=1,2$, there holds
\begin{align*}
\sup_{B_{r/4}(x_i)}\left(u_j^\ep-\overline{u_j^\ep}\right) \leq C,\quad i,j\in\{1,2\}~\text{and}~i\neq j.
\end{align*}
\end{lem}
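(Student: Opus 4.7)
By symmetry it suffices to treat $i=1$, $j=2$. My plan is to decompose $u_2^\ep$ on $B_r(x_1)$ as $u_2^\ep = A^\ep + B^\ep + C^\ep$, where $A^\ep$ and $B^\ep$ solve Dirichlet problems on $B_r(x_1)$ with zero boundary data,
\[
-\Delta A^\ep = (8\pi-2\ep) h_2 e^{u_2^\ep} - (4\pi-\ep), \qquad -\Delta B^\ep = -(4\pi-\ep) h_1 e^{u_1^\ep},
\]
and $C^\ep$ is the harmonic function on $B_r(x_1)$ with boundary values $u_2^\ep|_{\partial B_r(x_1)}$. Each piece is then bounded separately.

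The bound $B^\ep \leq 0$ on $B_r(x_1)$ is immediate from $h_1 > 0$ on $B_r(x_1)$: the right-hand side of $-\Delta B^\ep$ is non-positive, so $B^\ep$ is subharmonic with zero boundary and the maximum principle applies. This is the essential mechanism preventing blow-up of $u_2^\ep$ at $x_1$: the Dirac concentration of $u_1^\ep$ there enters the equation for $u_2^\ep$ with a negative sign and can only push $u_2^\ep$ downward. For $C^\ep$, the hypothesis $r < \text{dist}(x_1,x_2)/2$ combined with $S=\{x_1,x_2\}$ (which follows from Lemma \ref{mu_1mu_2} since $\gamma_1=8\pi\delta_{x_1}-4\pi\delta_{x_2}$ and $\gamma_2=-4\pi\delta_{x_1}+8\pi\delta_{x_2}$) ensures $\partial B_r(x_1) \subset M\setminus S$, so \eqref{outbound} gives $u_2^\ep \leq \overline{u_2^\ep} + C$ on $\partial B_r(x_1)$, and the maximum principle yields $C^\ep \leq \overline{u_2^\ep} + C$ throughout $B_r(x_1)$.

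The main effort is the upper bound on $A^\ep$, for which Lemma \ref{mu_1mu_2} is the crucial input. Since $x_1$ is the only point of $S$ in $B_r(x_1)$, I claim $h_2^+(x_1)\mu_2(\{x_1\}) = 0$: if $h_2(x_1) > 0$ then Lemma \ref{mu_1mu_2} forces $\mu_2(\{x_1\}) = 0$ (as $h_2(x_1)\mu_2(\{x_1\}) = \delta_{x_2}(\{x_1\}) = 0$), while if $h_2(x_1) \leq 0$ then $h_2^+(x_1) = 0$ directly. Consequently, by continuity of $h_2^+$ and the weak-$*$ convergence $e^{u_2^\ep}\,dv_g \to \mu_2$, $\int_{B_r(x_1)} h_2^+ e^{u_2^\ep}\,dv_g \to 0$ as $\ep \to 0$. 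Decomposing $A^\ep = A_1^\ep - A_2^\ep$ according to the positive and negative parts of its right-hand side, the maximum principle gives $A_2^\ep \geq 0$ and hence $A^\ep \leq A_1^\ep$. The positive-part solution $A_1^\ep$ has source bounded by $(8\pi-2\ep) h_2^+ e^{u_2^\ep}$, whose $L^1$-mass tends to zero without producing any atomic concentration in its weak-$*$ limit. A Brezis-Merle estimate (cf.\ \cite{BM}) applied to $u_2^\ep$ after subtracting its harmonic extension from $\partial B_r(x_1)$ yields $\|e^{u_2^\ep}\|_{L^p(B_{r/2}(x_1))} \to 0$ for every $p<\infty$, and standard interior elliptic regularity then gives $A_1^\ep \leq C$ uniformly on $B_{r/4}(x_1)$. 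Summing the three bounds produces $u_2^\ep \leq \overline{u_2^\ep} + C$ on $B_{r/4}(x_1)$, as required.

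The main technical obstacle is precisely the case $h_2(x_1) = 0$, where $e^{u_2^\ep}$ itself may concentrate at $x_1$ with positive limiting mass $\mu_2(\{x_1\})>0$. The argument survives because the relevant quantity for bounding $A^\ep$ from above is the weighted density $h_2^+ e^{u_2^\ep}$---not $e^{u_2^\ep}$ alone---and the vanishing of $h_2$ at $x_1$ provides precisely the damping needed to kill atomic concentration in the positive source and enable the Brezis-Merle-plus-elliptic-regularity bound on $A_1^\ep$.
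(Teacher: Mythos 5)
Your proposal is correct and is essentially the paper's own argument: the paper splits $u_2^\ep-\overline{u_2^\ep}$ on $B_r(x_1)$ into a piece $v_1^\ep$ with source $(8\pi-2\ep)h_2e^{u_2^\ep}$, controlled by Brezis--Merle because Lemma \ref{mu_1mu_2} forces $\int_{B_r(x_1)}|h_2|e^{u_2^\ep}\to 0$, and a remainder $v_2^\ep$ with $-\Delta v_2^\ep=-(4\pi-\ep)-(4\pi-\ep)h_1e^{u_1^\ep}\le 0$ (this is where $h_1>0$ on $B_r(x_1)$ enters), bounded above by the local maximum principle with the $L^s$ bound of Lemma \ref{lem-Ls} in place of your harmonic extension $C^\ep$ and \eqref{outbound}; the subsequent bootstrap ($e^{u_2^\ep}$ in $L^p$, then elliptic estimates) is identical. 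The only phrase to fix is ``a Brezis--Merle estimate applied to $u_2^\ep$ after subtracting its harmonic extension'': taken literally, the source of $A^\ep+B^\ep$ carries $L^1$-mass tending to $4\pi\bigl(1+|B_r(x_1)|\bigr)>4\pi$ because $\int_{B_r(x_1)}h_1e^{u_1^\ep}\to\sigma_1(\{x_1\})=1$, so Theorem 1 of \cite{BM} gives nothing there; what you actually need --- and have already set up --- is Brezis--Merle applied to $A_1^\ep$ alone, combined with the pointwise bounds $B^\ep\le 0$, $-A_2^\ep\le 0$ and $C^\ep\le\overline{u_2^\ep}+C$ to get $e^{u_2^\ep}\le Ce^{\overline{u_2^\ep}}e^{A_1^\ep}$ on $B_r(x_1)$.
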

\begin{proof} For $i=1$, we consider the solution of
\begin{align*}
\begin{cases}
-\Delta v_{1}^\ep = (8\pi-2\ep)h_2e^{u_2^\ep} &\text{in}~B_r(x_1),\\
v_{1}^\ep=0 &\text{on}~\p B_r(x_1).
\end{cases}
\end{align*}
Denote by $v_2^\ep=u_2^\ep-\overline{u_2^\ep}-v_1^\ep$, then
\begin{align*}
-\Delta v_2^\ep = -(4\pi-\ep)-(4\pi-\ep)h_1e^{u_1^\ep}\leq-(4\pi-\ep)~~\text{in}~B_r(x_1)
\end{align*}
since $h_1>0$ in $B_r(x_1)$. By Theorem 8.17 in \cite{GT} (or Theorem 4.1 in \cite{HL}) and Lemma \ref{lem-Ls}, we have
\begin{align*}
\sup_{B_{r/2}(x_1)}v_2^\ep \leq& C\left(\|(v_2^\ep)^+\|_{L^s(B_r(x_1))}+C\right)\\
\leq& C\left(\|u_2^\ep-\overline{u_2^\ep}\|_{L^s(M)}+\|v_1^\ep\|_{L^s(B_r(x_1))}+C\right)\\
\leq& C\left(\|\nabla u_2^\ep\|_{L^s(M)}+\|v_1^\ep\|_{L^s(B_r(x_1))}+C\right)\\
\leq& C\left(\|v_1^\ep\|_{L^s(B_r(x_1))}+C\right).
\end{align*}
Since $\int_{B_r(x_1)}|h_2|e^{u_2^\ep}\to0$ as $\ep\to0$, it follows from Theorem 1 in \cite{BM} that $\int_{B_r(x_1)}e^{t|v_1^\ep|}\leq C$ for some $t>1$, which yields that
$$\|v_1^\ep\|_{L^s(B_r(x_1))}\leq C.$$
Then we have
$$\sup_{B_{r/2}(x_1)}v_2^\ep\leq C.$$
Note that
\begin{align*}
\int_{B_{r/2}(x_1)}e^{tu_2^\ep}=&\int_{B_{r/2}(x_1)}e^{t\overline{u_2^\ep}}e^{tv_2^\ep}e^{tv_1^\ep}\\
\leq& C\int_{B_{r/2}(x_1)}e^{t|v_1^\ep|}\\
\leq& C.
\end{align*}
By the standard elliptic estimates, we have
\begin{align*}
\|v_1^\ep\|_{L^{\infty}(B_{r/4}(x_1))} \leq C.
\end{align*}
Therefore, we obtain that
\begin{align*}
u_2^\ep-\overline{u_2^\ep}\leq C~~\text{in}~~B_{r/4}(x_1).
\end{align*}
Similarly, we can prove
\begin{align*}
u_1^\ep-\overline{u_1^\ep}\leq C~~\text{in}~~B_{r/4}(x_2).
\end{align*}
This finishes the proof.
\end{proof}

Recalling that $\overline{u_i^\ep}\to-\infty$ and $\max_{M}u_i^\ep(x)=u_i^\ep(x_i^\ep)$, $i=1,2$, it follows from \eqref{converge-out}, Lemmas \ref{mu_1mu_2} and \ref{upperbound} that
\begin{align*}
x_i^\ep\to x_i~~\text{as}~~\ep\to0,~i=1,2.
\end{align*}
Let $(\Omega_i;(x^1,x^2))$ be an isothermal coordinate system around $x_i$ and we assume the metric to be
$$g|_{\Omega_i} = e^{\phi_i}((dx^1)^2+(dx^2)^2),~~\phi_i(0)=0.$$
Similar as Case 1 in \cite{LL05} and Lemma 2.5 in \cite{DJLW97}, we have
\begin{align*}
u^\ep_i(x^\ep_i+r_i^{\ep}x)-m_i^\ep\to -2\log(1+\pi h_i(x_i)|x|^2),~~i=1,2,
\end{align*}
where $m_i^\ep=\max_Mu_i^\ep$ and $r_i^\ep=e^{-m_i^\ep/2}$.

By taking \eqref{h_2mu_2} into \eqref{eq-green-1}, we have
\begin{align*}
\begin{cases}
-\Delta G_1 = 8\pi(\delta_{x_1}-1)-4\pi(\delta_{x_2}-1),\\
-\Delta G_2 = 8\pi(\delta_{x_2}-1)-4\pi(\delta_{x_1}-1),\\
\int_MG_1=\int_MG_2=0.
\end{cases}
\end{align*}
Recalling that for any $s\in(1,2)$, for $i=1,2$, we have
$u^\ep_i-\overline{u^\ep_i}\to G_i$ weakly in $W^{1,s}(M)$ and strongly in $C^2_{\text{loc}}(M\setminus\{x_1,x_2\})$ as $\ep\to0$.

It was proved by Li-Li in \cite[page 708]{LL05} that, in $\Omega_1$,
\begin{align*}
G_1(x,x_1)=-4\log r+A_1(x_1)+f_1,~~G_2(x,x_1)=2\log r+A_2(x_1)+g_1,
\end{align*}
where $r^2=x_1^2+x_2^2$, $A_i(x_1)$ $(i=1,2)$ are constants and $f_1,g_1$ are two smooth functions which are zero at $x_1$.
In $\Omega_2$,
\begin{align*}
G_1(x,x_2)=2\log r+A_1(x_2)+f_2,~~G_2(x,x_2)=-4\log r+A_2(x_2)+g_2,
\end{align*}
where $A_i(x_2)$ $(i=1,2)$ are constants and $f_2,g_2$ are two smooth functions which are zero at $x_2$.

\section{ The lower bound for \texorpdfstring{$J_{4\pi,4\pi}$}{J} in Case 3}
In this section, we shall derive an explicit lower bound of $J_{4\pi,4\pi}$ under the assumptions $(u_1^\ep,u_2^\ep)$ blows up and Case 3 happens.

Following closely the calculations in \cite[Section 3]{LL05}, we have
\begin{align*}
J_{4\pi-\ep,4\pi-\ep}(u_1^\ep,u_2^\ep)\geq&-4\pi-4\pi\log(\pi h_1(x_1))-2\pi A_1(x_1)\nonumber\\
 &-4\pi-4\pi\log(\pi h_2(x_2))-2\pi A_2(x_2)+o_{\ep}(1)+o_{L}(1)+o_{\delta}(1).
\end{align*}
By letting $\ep\to0$ first, then $L\to+\infty$ and then $\delta\to0$, we obtain finally that
\begin{align}\label{lower-bound}
\inf_{\mathcal{H}}J_{4\pi,4\pi}\geq&-4\pi-4\pi\log(\pi h_1(x_1))-2\pi A_1(x_1)\nonumber\\
 &-4\pi-4\pi\log(\pi h_2(x_2))-2\pi A_2(x_2)\nonumber\\
 \geq&-8\pi-8\pi\log\pi-2\pi\max_{x\in M_1^+}\left(2\log h_1(x)+A_1(x)\right)\nonumber\\
 &-2\pi\max_{x\in M_2^+}\left(2\log h_2(x)+A_2(x)\right).
\end{align}

\section{Completion of the proof of Theorem \ref{thm-sz-2}}
In this section, we shall use the test functions constructed in \cite{LL05} to finish the proof of our main theorem.

Let $\phi_1^\ep$ and $\phi_2^\ep$ be defined as \cite[Section 5]{LL05}. Suppose that
$$2\log h_i(p_i)+A_i(p_i)=\max_{x\in M_i^+}(2\log h_i(x)+A_i(x))~\text{for}~i=1,2.$$
Following directly the calculations in \cite[Section 5]{LL05} and \cite[Section 4]{SZhu24+}, we obtain
\begin{align*}
J_{4\pi,4\pi}(\phi_1^\ep,\phi_2^\ep)
\leq&-8\pi-8\pi\log\pi-4\pi\log h_1(p_1)-2\pi A_1(p_1)-4\pi\log h_2(p_2)-2\pi A_2(p_2)\nonumber\\
 &-\left[\Delta\log h_1(p_1)+4\pi-2K(p_1)\right]\ep^2(-\log\ep^2)\nonumber\\
 &-\left[\Delta\log h_2(p_2)+4\pi-2K(p_2)\right]\ep^2(-\log\ep^2)\nonumber\\
 &+o(\ep^2(-\log\ep^2)).
\end{align*}
Then under the condition \eqref{sz-cond-2}, we have for sufficiently small $\ep$ that
\begin{align*}
J_{4\pi,4\pi}(\phi_1^\ep,\phi_2^\ep)
<-8\pi-8\pi\log\pi-4\pi\log h_1(p_1)-2\pi A_1(p_1)-4\pi\log h_2(p_2)-2\pi A_2(p_2).
\end{align*}

It is easy to check that $\int_{M}h_ie^{\phi_i^\ep}>0$ for $i=1,2$, we define
\begin{align*}
\widetilde{\phi_i^\ep}=\phi_i^\ep-\log\int_{M}h_ie^{\phi_i^\ep},~~i=1,2.
\end{align*}
Then $(\widetilde{\phi_1^\ep},\widetilde{\phi_2^\ep})\in\mathcal{H}$. Since $J_{4\pi,4\pi}(u_1+c_1,u_2+c_2)=J_{4\pi,4\pi}(u_1,u_2)$ for any $c_1,c_2\in\mathbb{R}$, we have for sufficiently small $\ep$ that
\begin{align}\label{up-bound-last}
\inf_{\mathcal{H}}J_{4\pi,4\pi}\leq& J_{4\pi,4\pi}(\widetilde{\phi_1^\ep},\widetilde{\phi_2^\ep})=J_{4\pi,4\pi}(\phi_1^\ep,\phi_2^\ep)\nonumber\\
<&-8\pi-8\pi\log\pi-2\pi\max_{x\in M_+}\left(2\log h_1(x)+A_1(x)\right)\nonumber\\
 &-2\pi\max_{x\in M_2^+}\left(2\log h_2(x)+A_2(x)\right).
\end{align}
Combining \eqref{lower-bound} and \eqref{up-bound-last}, one knows that $(u_1^\ep,u_2^\ep)$ does not blow up. So $(u_1^\ep,u_2^\ep)$ converges to some $(u_1,u_2)$
which minimizes $J_{4\pi,4\pi}$ in $\mathcal{H}$ and solves \eqref{eq-jlw-2}. The smooth of $u_1$ and $u_2$ follows from the standard elliptic estimates. Finally,
we complete the proof of Theorem \ref{thm-sz-2}.   $\hfill{\square}$

\vspace{1cm}

\noindent\textbf{Data Availability} Data sharing is not applicable to this article as obviously no datasets were generated or
analyzed during the current study.

\noindent\textbf{Conflict of interest} The authors have no Conflict of interest to declare that are relevant to the content of this
article.

\end{document}